\documentclass[12pt,reqno]{amsart}

\usepackage[T1]{fontenc}
\usepackage[utf8]{inputenc}
\usepackage[english]{babel}
\usepackage{amsthm,amssymb,amsmath}

\usepackage[top=1in, bottom=1in, left=1in, right=1in]{geometry}
\usepackage[colorlinks=true,linkcolor=blue,citecolor=blue,urlcolor=blue]{hyperref}

\newtheorem{theorem}{Theorem}[section]
\newtheorem{proposition}[theorem]{Proposition}
\newtheorem{corollary}[theorem]{Corollary}
\newtheorem{lemma}[theorem]{Lemma}

\theoremstyle{definition}

\DeclareMathOperator{\Hol}{Hol}
\DeclareMathOperator{\Aut}{Aut}
\DeclareMathOperator{\Soc}{Soc}
\DeclareMathOperator{\PSL}{PSL}
\DeclareMathOperator{\PSU}{PSU}
\DeclareMathOperator{\pr}{pr}
\newcommand{\KT}{\mathsf K}
\newcommand{\id}{\mathrm{id}}

\title[Kernel--wreath constructions]
{Kernel--wreath constructions and infinite families of finite simple skew braces}

\author[Marco Damele]{Marco Damele}
\thanks{Dipartimento di Matematica, Università di Cagliari, Via Ospedale 72, 09124 Cagliari, Italy; \texttt{marco.damele@unica.it}; ORCID 0009-0008-3088-5766}

\subjclass[2020]{Primary 16T25; Secondary 20E22, 20D10}
\keywords{skew brace, simple skew brace, wreath product, regular subgroup, holomorph}

\begin{document}

\begin{abstract}
We introduce a kernel--wreath construction for finite skew braces. Let
$C$ be a finite skew brace, write $(C,+)=A$ and $(C,\circ)=R$, and let
$\chi:R\twoheadrightarrow T$ be an epimorphism onto a non-trivial finite
abelian group. We show that a natural index-$|T|$ kernel in the
permutational wreath product $R\wr T$ acts regularly on $A^T$, and hence
defines a new skew brace $\KT_T(C,\chi)$ with additive group
$A^{|T|}$. The construction preserves every finite abelian quotient of
the multiplicative group, as well as solvability, and contains the seed
brace $C$ as a diagonal subbrace. It can therefore be iterated
indefinitely. More precisely, if $Q$ is a non-trivial finite abelian
quotient of $R$ and $p\in\pi(Q)$, then one obtains an infinite tower
\[
C=C_0\hookrightarrow C_1\hookrightarrow C_2\hookrightarrow\cdots
\]
with $(C_m,+)\cong A^{p^m}$ for every $m\geq0$.
Our main permanence result shows that if $C$ is simple and is not a
trivial skew brace, then $\KT_T(C,\chi)$ is again simple. Consequently,
a single simple seed whose multiplicative group has a non-trivial finite
abelian quotient gives rise to infinite families of finite simple skew
braces. In particular, starting from suitable solvable regular subgroups
of the holomorph of a finite non-abelian simple group $S$, we obtain
infinite families of simple skew braces with additive groups
$S^{p^m}$ and solvable multiplicative groups. Further applications are
given to the simple skew braces of order $12$ and to Byott's family.
\end{abstract}

\maketitle

\section{Introduction}

Skew braces, introduced by Guarnieri and Vendramin \cite{GV17}, provide
an algebraic description of regular subgroups of holomorphs and play a
central role in the study of non-degenerate set-theoretic solutions of
the Yang--Baxter equation. A skew brace consists of two group structures
on the same underlying set, related by a compatibility condition, and
its ideals are the subgroups which are simultaneously compatible with
the additive structure, the multiplicative structure and the lambda
action. The construction and classification of finite simple skew braces
are therefore natural structural problems.

Several constructions of finite simple braces are known when the
additive group is abelian. The first non-trivial finite simple braces
were obtained by Bachiller using matched products \cite{Bachiller18},
and further constructions have produced large families of simple braces
\cite{CJO21}. The genuinely skew case is less developed. The
smallest simple skew braces which are not braces have order $12$; there
are two such examples, with additive group $A_4$ and multiplicative
group $C_3\rtimes C_4$ \cite{KSV21}. More recently, Byott constructed
an infinite family of simple skew braces of order $p^pq$, for suitable
primes $p$ and $q$, with both additive and multiplicative groups
solvable \cite{Byott26}.

The aim of this paper is to introduce a general construction that
starts from a finite skew brace and produces new skew braces on direct
powers of its additive group. The construction has three features that
make it particularly useful: it is explicit, it can be iterated, and,
under a natural hypothesis on the seed, it preserves simplicity. Let $C$ be a finite skew brace and write $(C,+)=A$ and $(C,\circ)=R$.
Assume that $R$ admits an epimorphism
$\chi:R\twoheadrightarrow T$ onto a non-trivial finite abelian group.
Let $\Omega$ be the underlying set of $T$. We consider the permutational
wreath product $W=R^\Omega\rtimes T$
and define
\[
\varepsilon:W\longrightarrow T,
\qquad
\varepsilon((r_x)_{x\in\Omega},u)
=
\left(\prod_{x\in\Omega}\chi(r_x)\right)u.
\]
The first main result shows that the kernel of this map has precisely the
right size and action to define a new skew brace.

\medskip
\noindent\textbf{Theorem A.}
\emph{The map $\varepsilon$ is a surjective homomorphism and
$G=\ker\varepsilon$ is a regular subgroup of $\Hol(A^\Omega)$. Hence
$G$ determines a finite skew brace, denoted by
$\KT_T(C,\chi)$, satisfying
\[
(\KT_T(C,\chi),+)\cong A^{|T|}
\qquad\text{and}\qquad
(\KT_T(C,\chi),\circ)\cong G.
\]
}

The construction may be viewed as a balanced version of the natural
wreath-product action. The base group $R^\Omega$ allows the original
brace to act independently on each coordinate, while the top group $T$
permutes the coordinate factors transitively. The single relation
$\varepsilon=1$ reduces the order of the wreath product by precisely the
factor $|T|$, required for regularity, without destroying
either the local action of $R$ or the permutation of the coordinates.
A crucial feature of the construction is that finite abelian quotients
of the multiplicative group survive. More precisely, if
$\theta:R\twoheadrightarrow Q$ is an epimorphism onto a finite abelian
group, then
\[
((r_x)_{x\in\Omega},u)
\longmapsto
\prod_{x\in\Omega}\theta(r_x)
\]
defines an epimorphism from the multiplicative group of
$\KT_T(C,\chi)$ onto $Q$. Thus the same quotient can be used again at
the next step. Moreover, the seed brace occurs canonically as a diagonal
subbrace of the new one. These two facts yield infinite towers.

\medskip
\noindent\textbf{Theorem B.}
\emph{Suppose that $(C,\circ)=R$ admits an epimorphism onto a non-trivial
finite abelian group $Q$. For every prime $p\in\pi(Q)$ there exists an
infinite sequence of finite skew braces
$C=C_0\hookrightarrow C_1\hookrightarrow C_2\hookrightarrow\cdots$
such that
\[
(C_m,+)\cong A^{p^m}
\]
for every $m\geq0$. Each $C_m$ embeds diagonally as a subbrace of
$C_{m+1}$, and the multiplicative group of every $C_m$ still admits
$Q$ as a quotient. If $R$ is solvable, then every $(C_m,\circ)$ is
solvable.}

More generally, if $n\geq2$ satisfies
$\pi(n)\subseteq\pi(Q)$, the construction can be iterated using the
prime factors of $n$ and yields a finite skew brace $B_n$ with
$(B_n,+)\cong A^n$. Thus the kernel--wreath construction is not merely
a one-step construction, but a mechanism for producing skew braces on
arbitrarily large direct powers of a fixed additive group.

The main permanence theorem concerns simplicity.

\medskip
\noindent\textbf{Theorem C.}
\emph{Let $C$ be a finite simple skew brace which is not a trivial skew
brace, and let $\chi:(C,\circ)\twoheadrightarrow T$ be an epimorphism
onto a non-trivial finite abelian group. Then
$\KT_T(C,\chi)$ is simple.}

The proof exploits the internal structure supplied by the kernel. The
lambda action is transitive on the coordinate factors of $A^\Omega$,
while compensated base elements recover the full lambda action of $C$
on any chosen coordinate. At the same time, diagonal elements reproduce
the multiplicative conjugation of the seed brace. If $I$ is a proper
ideal of $\KT_T(C,\chi)$, these elements allow us to show that both its
intersections with the coordinate factors and its coordinate
projections give ideals of $C$. The simplicity of $C$ then forces a
strong restriction on $I$, and the remaining possibility is excluded
using the rigidity result for finite simple skew braces with nilpotent
multiplicative group from \cite[Corollary~3.2]{DameleErcan26}.

Combining Theorems~B and~C yields one of the main consequences of the
paper: a single simple seed with a non-trivial finite abelian quotient
of its multiplicative group produces infinitely many finite simple skew
braces. More precisely, if $Q$ is such a quotient and
$p\in\pi(Q)$, then one obtains a nested tower
$C=C_0\hookrightarrow C_1\hookrightarrow C_2\hookrightarrow\cdots$
of finite simple skew braces satisfying
$(C_m,+)\cong (C,+)^{p^m}$
for every $m\geq0$. If $(C,\circ)$ is solvable, then all the
multiplicative groups occurring in the tower remain solvable.
In particular, every finite simple skew brace which is not trivial and
has solvable multiplicative group gives rise to an infinite family of
finite simple skew braces with solvable multiplicative groups. Indeed,
a non-trivial finite solvable group has non-trivial abelianization.
The construction is especially effective when the additive group of the
seed is a finite non-abelian simple group. Let $S$ be such a group and
let $R\leq\Hol(S)$ be a solvable regular subgroup. The corresponding
skew brace is automatically simple, and for every
$p\in\pi(R/R')$ our construction gives the following family.

\medskip
\noindent\textbf{Corollary D.}
\emph{Let $S$ be a finite non-abelian simple group and let
$R\leq\Hol(S)$ be a solvable regular subgroup. For every
$p\in\pi(R/R')$ there exists an infinite sequence of finite simple skew
braces $B_0,B_1,B_2,\ldots$ such that
$(B_m,+)\cong S^{p^m}$
for every $m\geq0$, while every $(B_m,\circ)$ is solvable. Moreover, the
braces may be chosen to form a nested tower under diagonal subbrace
embeddings.}

Thus the construction produces systematically simple skew brace
structures on arbitrarily large direct powers of a fixed non-abelian
simple group. Notice that, for $m>0$, the additive group $S^{p^m}$ is
characteristically simple but is no longer simple as a group; its
simplicity as a skew brace is therefore a genuinely new phenomenon
coming from the interaction between the coordinate factors.

Tsang classified the finite non-abelian simple groups whose holomorph
contains a solvable regular subgroup \cite{Tsang23}. They are
$\PSL_2(q)$, with $q\neq2,3$ a prime power, together with
\[
\PSL_3(3),\quad
\PSL_3(4),\quad
\PSL_3(8),\quad
\PSU_3(8),\quad
\PSU_4(2),\quad
M_{11}.
\]
Consequently, every group in this list gives rise to infinite families
of simple skew braces on direct powers of the same simple group. Two
examples illustrate the construction explicitly. For $S=A_5$, the exact factorization $A_5=A_4C_5$ gives a solvable
regular subgroup $R\cong A_4\times C_5$
of $\Hol(A_5)$. Since $R/R'\cong C_3\times C_5\cong C_{15},$
both primes $3$ and $5$ may be used in the construction.

\medskip
\noindent\textbf{Corollary E.}
\emph{There exist two infinite families of finite simple skew braces,
denoted by $B_m^{(3)}$ and $B_m^{(5)}$ for $m\geq0$, such that
\[
(B_m^{(3)},+)\cong A_5^{3^m},
\qquad
(B_m^{(5)},+)\cong A_5^{5^m},
\]
and all their multiplicative groups are solvable. More generally, for
every $n\geq2$ with $\pi(n)\subseteq\{3,5\}$ there exists a finite
simple skew brace $B_n$ such that $(B_n,+)\cong A_5^n$
and $(B_n,\circ)$ is solvable.}

A second example is provided by $\PSU_3(8)$. The solvable regular
subgroup constructed by Tsang in
\cite[Proof of Theorem~1.3, case~(3)]{Tsang23} admits a quotient
isomorphic to $C_3$. Hence our construction yields another explicit
tower.

\medskip
\noindent\textbf{Corollary F.}
\emph{For every $m\geq0$ there exists a finite simple skew brace $B_m$
such that $(B_m,+)\cong\PSU_3(8)^{3^m},$
while $(B_m,\circ)$ is solvable.}

The same mechanism applies well beyond simple additive groups. Starting
from either of the two simple skew braces of order $12$, whose additive
group is $A_4$ and whose multiplicative group admits a quotient $C_2$,
we obtain simple skew braces with additive groups
\[
A_4,\quad A_4^2,\quad A_4^4,\quad A_4^8,\ldots
\]
and solvable multiplicative groups. Similarly, every skew brace in
Byott's family provides a seed for an infinite tower: if $N$ denotes
its additive group and $p$ is the prime occurring in the construction,
then we obtain finite simple skew braces with additive groups
\[
N,\quad N^p,\quad N^{p^2},\quad N^{p^3},\ldots
\]
and solvable multiplicative groups.
These examples show that the kernel--wreath construction does not merely
produce isolated new simple skew braces. Rather, it turns a single
suitable seed into an infinite and explicitly controlled family, while
preserving both simplicity and important structural properties of the
multiplicative group.

The paper is organized as follows. Section~\ref{sec:prelim} recalls the
basic facts on skew braces and regular subgroups of holomorphs. In
Section~\ref{sec:construction} we introduce the kernel--wreath
construction, prove Theorem~A, establish preservation of finite abelian
quotients and solvability, and construct the diagonal copy of the seed.
The same section also develops the iteration procedure and the infinite
towers of Theorem~B. Section~\ref{sec:simplicity} is devoted to the
proof of Theorem~C. Finally, Section~\ref{sec:applications} develops the
families described above, with particular attention to non-abelian
simple additive groups, the simple skew braces of order $12$, and
Byott's family.

\section{Preliminaries}\label{sec:prelim}

\subsection{Skew braces}

We follow the terminology and basic notation on skew braces from
\cite{GV17,KSV21}. A \emph{skew brace} is a triple $(B,+,\circ)$ such that $(B,+)$ and $(B,\circ)$ are groups and $a\circ(b+c)=a\circ b-a+a\circ c$
for all $a,b,c\in B$. The identity elements of the two groups coincide. The map $\lambda^{B}:(B,\circ)\longrightarrow \Aut(B,+)$ given by $\lambda^{B}(a)(b)=\lambda^{B}_a(b)=-a+a\circ b,$
is a group homomorphism. We write $a*b=\lambda^{B}_a(b)-b.$
The skew brace is \emph{trivial} if $\lambda^{B}_a=\id$ for every $a\in B$, equivalently if $a\circ b=a+b$ for all $a,b\in B$. A subgroup $I\leq(B,+)$ is an \emph{ideal} if $I\trianglelefteq(B,+),$ $I\trianglelefteq(B,\circ)$ and for all $a \in B$ $\lambda^{B}_{a}(I)=I$.
A non-zero skew brace is \emph{simple} if its only ideals are $0$ and $B$. The socle is $\Soc(B)=Z(B,+)\cap\ker\lambda^{B}.$
It is an ideal of $B$. Hence, if $B$ is simple and is not a trivial skew brace, then $\Soc(B)=0.$  

\subsection{Holomorph and skew braces}

Let \(H\) be a finite group. Recall that its holomorph is $\mathrm{Hol}(H)=H\rtimes \mathrm{Aut}(H)$ where $\mathrm{Aut}(H)$ acts on $H$ by evaluation. Every element of \(\mathrm{Hol}(H)\) can be written as a pair \((x,f)\), with $x \in H$ and $f \in \mathrm{Aut}(H)$ and \(\mathrm{Hol}(H)\) acts naturally on \(H\) via $(x,f)\cdot h=x\,f(h)$ for all $h \in H.$
Now let \(G\leq \mathrm{Hol}(H)\). By restriction, \(G\) acts on \(H\) through the same rule. We say that \(G\) is a \emph{regular subgroup} of \(\mathrm{Hol}(H)\) if this action is regular. Recall that the action of $G$ on $H$ is regular if it is transitive and free
(equivalently, $\operatorname{Stab}_G(0)=1$ and $G\cdot 0=H$). Equivalently, \(G\) is regular if for every \(h_1,h_2\in H\) there exists a unique \(g\in G\) such that $g\cdot h_1=h_2.$
The connection between skew braces and regular subgroups of the holomorph is summarized in the following well-known theorem.

\begin{theorem}[{\cite[Theorem~4.2]{GV17}}] \label{Connection}
Let \(H\) be a group and let \(G\) be a regular subgroup of \(\mathrm{Hol}(H)\). Then there exists a skew brace \((B,+,\cdot)\) such that $(B,+)\cong H$ and $(B,\cdot)\cong G$. Conversely, if \((B,+,\cdot)\) is a skew brace, then \((B,\cdot)\) can be identified with a regular subgroup of \(\mathrm{Hol}(B,+)\) via the map $(B,\cdot)\longrightarrow \mathrm{Hol}(B,+),
 \ b\longmapsto (b,\lambda^{B}_b)$.
\end{theorem}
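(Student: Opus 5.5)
The plan is to make both directions explicit, using only the regularity of the action and the definition of the holomorph. For the first direction I write the group operation of $H$ additively, as $+$, even though $H$ need not be abelian, and let $0$ be its identity. Since $G$ is regular, the orbit map $G\to H$, $g\mapsto g\cdot 0$, is a bijection. So for each $a\in H$ there is a unique $g_a\in G$ with $g_a\cdot 0=a$.

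Writing $g_a=(x,f)$, the condition $g_a\cdot 0=x+f(0)=x=a$ shows that the first component of $g_a$ is $a$. Hence $g_a=(a,f_a)$ for a uniquely determined $f_a\in\Aut(H)$. I then transport the group structure of $G$ to $H$ by setting
\[
a\circ b:=g_a\cdot b=a+f_a(b).
\]

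The first step is to check that $a\mapsto g_a$ is an isomorphism $(H,\circ)\to G$. The key computation is
\[
(g_ag_b)\cdot 0=g_a\cdot(g_b\cdot 0)=g_a\cdot b=a\circ b.
\]
By uniqueness this gives $g_{a\circ b}=g_ag_b$. Since $a\mapsto g_a$ is a bijection and $G$ is a group, $(H,\circ)$ is a group isomorphic to $G$, with identity $0$ because $g_0=1$. The second step is the brace identity. Because $f_a$ is an automorphism of $(H,+)$,
\[
a\circ(b+c)=a+f_a(b)+f_a(c)=\bigl(a+f_a(b)\bigr)-a+\bigl(a+f_a(c)\bigr)=a\circ b-a+a\circ c.
\]
This yields the skew brace $(H,+,\circ)$, with additive group $H$ and multiplicative group isomorphic to $G$.

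For the converse, let $(B,+,\circ)$ be a skew brace and consider $\iota(b)=(b,\lambda^B_b)\in\Hol(B,+)$. This is well defined because each $\lambda^B_b$ lies in $\Aut(B,+)$, as recalled in the preliminaries. It is a homomorphism because
\[
(a,\lambda^B_a)(b,\lambda^B_b)=\bigl(a+\lambda^B_a(b),\,\lambda^B_a\lambda^B_b\bigr)=\bigl(a\circ b,\,\lambda^B_{a\circ b}\bigr).
\]
Here the first component uses $\lambda^B_a(b)=-a+a\circ b$, and the second uses that $\lambda^B$ is a homomorphism. The map $\iota$ is injective by looking at first components. Regularity of $\iota(B)$ follows from
\[
\iota(b)\cdot h=b+\lambda^B_b(h)=b\circ h.
\]
So $\iota(b)$ acts by left $\circ$-multiplication, which is simply transitive because $(B,\circ)$ is a group.

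There is no real obstacle here. The only point needing care is identifying the first coordinate of $g_a$ with $a$, which is what makes $\circ$ well defined and the correspondence canonical. I also note that $\lambda^B_b=f_b$ in the first construction, so the two directions are mutually inverse.
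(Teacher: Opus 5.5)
Your proof is correct. The paper gives no proof of this result: it quotes it from \cite[Theorem~4.2]{GV17}, and your argument is essentially the standard one given there. In one direction you transport the group structure of $G$ to $H$ through the bijection $g\mapsto g\cdot 0$, which is the first-coordinate projection of $G$. In the other you check that $b\mapsto(b,\lambda^B_b)$ is an injective homomorphism whose image acts on $B$ by left $\circ$-multiplication. Nothing is missing: the facts that each $\lambda^B_b$ is an automorphism of $(B,+)$ and that $\lambda^B$ is a homomorphism are the ones recalled in the preliminaries.
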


\section{The kernel--wreath construction}\label{sec:construction}

\subsection{The new construction}

Let $C$ be a finite skew brace and write $(C,+)=A$ and $(C,\circ)=R$. Write $\lambda^C:R\to\Aut(A)$ for its lambda action. Since
$A=(C,+)$ and $R=(C,\circ)$ have the same underlying set, let $\pi:R\longrightarrow A$
denote the identity map on the underlying set of $C$. Thus, for every
$r,s\in R$ we have $\pi(rs)=\pi(r)+\lambda^C_r(\pi(s)).$
Hence $\pi$ is a bijective crossed homomorphism with respect to the action
$\lambda^C$. Let $T$ be a non-trivial finite abelian group and let $\chi:R\twoheadrightarrow T$
be an epimorphism.
Let $\Omega$ be the underlying set of $T$.
Observe that $T$ acts on the direct product $R^\Omega=\prod_{x\in\Omega}R$
by permutation of the coordinates. Explicitly, if $r=(r_x)_{x\in\Omega}\in R^\Omega$
and $u\in T$, we define ${}^u r=(r_{u^{-1}x})_{x\in\Omega}.$
We then consider the corresponding permutational wreath product $W=R\wr_\Omega T
   =R^\Omega\rtimes T.$
The natural transitive action of $W$ on $A^\Omega$ is given by
\begin{equation}\label{eq:wreath-action}
(r,u)\cdot(a_x)_{x\in\Omega}
 =\bigl(\pi(r_x)+\lambda^C_{r_x}(a_{u^{-1}x})\bigr)_{x\in\Omega}.
\end{equation}
For $(r,u)\in W=R^\Omega\rtimes T$, define
$\alpha_{(r,u)}((a_x)_{x\in\Omega})
=
\bigl(\lambda^C_{r_x}(a_{u^{-1}x})\bigr)_{x\in\Omega}.$
Then
\[
\Phi:W\longrightarrow \Hol(A^\Omega),
\qquad
\Phi((r_x)_{x\in\Omega},u)
=
\left(
(\pi(r_x))_{x\in\Omega},
\alpha_{(r,u)}
\right)
\]
is an injective group homomorphism. Indeed, the crossed-homomorphism identity
for $\pi$ and the fact that $\lambda^C$ is a homomorphism show that $\Phi$
respects multiplication in $W$. If $\Phi((r_x)_{x\in\Omega},u)$ is the
identity, then $\pi(r_x)=0$ for every $x$, hence $r_x=1_R$ for every $x$.
The remaining automorphism is the permutation of the coordinates induced by
$u$. Since $T$ is a non-trivial quotient of $R$, the group $A$ is non-trivial;
hence the coordinate-permutation action of $T$ on $A^\Omega$ is faithful.
Thus $u=1_T$, and we can view $W$ as a subgroup of $\Hol(A^\Omega)$.
Define
\begin{equation}\label{eq:epsilon}
\varepsilon:W\longrightarrow T,
\qquad
\varepsilon((r_x)_{x\in\Omega},u)
 =\left(\prod_{x\in\Omega}\chi(r_x)\right)u.
\end{equation}

\begin{theorem}[Kernel--wreath construction]\label{thm:construction}
The map $\varepsilon$ is a surjective homomorphism. Its kernel $G=\ker\varepsilon$
is a regular subgroup of $\Hol(A^\Omega)$. Consequently $G$ determines a skew brace, denoted $\KT_T(C,\chi),$
with $(\KT_T(C,\chi),+)\cong A^\Omega\cong A^{|T|}$ and $(\KT_T(C,\chi),\circ)\cong G.$ 
\end{theorem}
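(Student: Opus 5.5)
The plan has three steps: check that $\varepsilon$ is a surjective homomorphism, prove that $G=\ker\varepsilon$ acts freely on $A^\Omega$, and then conclude regularity by counting and apply Theorem~\ref{Connection}.

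\emph{Step 1: $\varepsilon$ is a surjective homomorphism.} In $W$ we have
\[
(r,u)(s,v)=(r\cdot{}^u s,\,uv), \qquad ({}^u s)_x=s_{u^{-1}x}.
\]
Since $T$ is abelian, the product $\prod_{x}\chi(r_x s_{u^{-1}x})$ can be rearranged as $\prod_x\chi(r_x)\prod_x\chi(s_x)$. Reindexing $x\mapsto ux$ leaves the product unchanged. Hence
\[
\varepsilon\bigl((r,u)(s,v)\bigr)=\varepsilon(r,u)\,\varepsilon(s,v),
\]
using commutativity once more to move $u$ past the product. Surjectivity is immediate: every $u\in T$ equals $\varepsilon(1,u)$. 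Therefore $|G|=|W|/|T|=|R|^{|T|}|T|/|T|=|A|^{|T|}=|A^\Omega|$.

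\emph{Step 2: $G$ acts freely.} Since $\Phi$ embeds $W$ in $\Hol(A^\Omega)$ and the action of $W$ is \eqref{eq:wreath-action}, regularity of $G$ reduces to freeness, because $|G|=|A^\Omega|$. By orbit--stabilizer, free plus $|G|=|A^\Omega|$ gives transitivity. It suffices to show $\operatorname{Stab}_G(0)=1$. Suppose $(r,u)\cdot 0=0$. Since $\lambda^C_{r_x}(0)=0$, the $x$-coordinate of $(r,u)\cdot 0$ is $\pi(r_x)$. So $\pi(r_x)=0$ for all $x$, which forces $r_x=1_R$ because $\pi$ is a bijection sending $1_R$ to $0$. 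Then $\varepsilon(r,u)=u$, and membership in $G$ forces $u=1$. So the stabilizer is trivial.

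\emph{Step 3: conclusion.} Theorem~\ref{Connection} now gives a skew brace $\KT_T(C,\chi)$ with additive group $A^\Omega\cong A^{|T|}$ and multiplicative group isomorphic to $G$.

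I expect the only delicate point to be the homomorphism check in Step 1. One must track the convention for the semidirect product (twisting by ${}^u s$ rather than ${}^{u^{-1}}s$) and confirm it matches the action \eqref{eq:wreath-action} that $\Phi$ uses. The argument itself is insensitive to this convention, because the reindexed product over all of $\Omega$ is the same either way. Notably, the stabilizer of $0$ in all of $W$ is exactly the top group $T$, and $\varepsilon$ is designed precisely to cut it out. This is the conceptual heart of the proof, and it is a short step.
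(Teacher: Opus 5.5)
Your proposal is correct and follows essentially the same route as the paper. You use the same reindexing computation for $\varepsilon$ and the same order count $|G|=|A^\Omega|$, show $\operatorname{Stab}_G(0)=1$ (the paper phrases this as $\operatorname{Stab}_W(0)=\widehat T$ together with $G\cap\widehat T=1$), get transitivity from orbit--stabilizer, and conclude with Theorem~\ref{Connection}.
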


\begin{proof}
Since $T$ is abelian and the top group only permutes coordinates,
\begin{align*}
\varepsilon((r,u)(s,v))
&=\varepsilon(r\,{}^u s,uv)=\left(\prod_{x\in\Omega}\chi(r_xs_{u^{-1}x})\right)uv=\left(\prod_{x\in\Omega}\chi(r_x)\right)
  \left(\prod_{x\in\Omega}\chi(s_{u^{-1}x})\right)uv\\
&=\varepsilon(r,u)\varepsilon(s,v).
\end{align*}
Thus $\varepsilon$ is a homomorphism. It is surjective since $\varepsilon((1)_{x\in\Omega},u)=u.$
Therefore $|G|=|W|/|T|=|R|^{|T|}=|A|^{|T|}=|A^\Omega|.$
Let $0=(0)_{x\in\Omega}\in A^\Omega$, and identify the top group with
\[
\widehat T=\{((1_R)_{x\in\Omega},u):u\in T\}\leq W.
\]
By \eqref{eq:wreath-action}, the stabilizer of $0$ in $W$ is precisely
$\widehat T$. Moreover $G\cap\widehat T=1$, since
$\varepsilon((1_R)_{x\in\Omega},u)=u$. It follows that the stabilizer
of $0$ in $G$ is trivial. In other words, $\operatorname{Stab}_G(0)=1.$
By the orbit--stabilizer theorem we have $|G\cdot 0|
=
[G:\operatorname{Stab}_G(0)]
=
|G|=|A^\Omega|.$
It follows that $G\cdot 0=A^\Omega.$
Thus the action of $G$ on $A^\Omega$ is transitive. Since $\operatorname{Stab}_G(0)=1$, the action is regular. The conclusion now follows from Theorem~\ref{Connection}.
\end{proof}

\subsection{Properties of the multiplicative group}

We record the elements of $G$ that will be used repeatedly.

\begin{lemma} \label{lem:elements}
Let $B=\KT_T(C,\chi)$
be the kernel--wreath skew brace, with $(B,+)=A^\Omega$ and $(B,\circ)\cong G=\ker\varepsilon\leq W=R^\Omega\rtimes T.$
Let $\omega,\nu\in\Omega$ be distinct. Then the following hold.

\begin{enumerate}

\item
Let $r,s\in R$ satisfy $\chi(r)\chi(s)=1_T.$ Define $b=(b_x)_{x\in\Omega}\in R^\Omega$ by $b_\omega=r$, $b_\nu=s$ and $ b_x=1_R $ for all $x\notin\{\omega,\nu\}$. Then $(b,1_T)\in G.$ \item For every $r\in R$, the diagonal element $d_r=\bigl((r)_{x\in\Omega},1_T\bigr)\in W$ belongs to $G$. 

\item Let $\rho:W=R^\Omega\rtimes T\longrightarrow T$ be the canonical projection onto the top group. Then the restriction $\rho|_G:G\longrightarrow T$ is surjective. For each $x\in\Omega$, let $A_x = \{(a_y)_{y\in\Omega}\in A^\Omega : a_y=0 \text{ for all } y\neq x\}$. The lambda action of an element $g=((r_y)_{y\in\Omega},u)\in G$ satisfies $\lambda^B_g(A_x)=A_{ux}.$ Since the projection $G\to T$ is surjective and $T$ acts transitively on $\Omega$, the lambda action of $G$ is transitive on the set of coordinate factors $ \{A_x:x\in\Omega\}.$
\end{enumerate}
\end{lemma}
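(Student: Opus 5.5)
Each of the three items can be checked directly from the definition \eqref{eq:epsilon} of $\varepsilon$, so I would simply evaluate $\varepsilon$ on the relevant elements and then read off the lambda action from the embedding $\Phi$.

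For (1), the element $(b,1_T)$ has $\varepsilon(b,1_T)=\prod_x\chi(b_x)=\chi(r)\chi(s)\chi(1_R)^{|T|-2}=1_T$. Here we use that $T$ is abelian, so the order of the product does not matter. Hence $(b,1_T)\in\ker\varepsilon=G$.

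For (2), we have $\varepsilon(d_r)=\chi(r)^{|T|}=1_T$, because the exponent of $T$ divides $|T|$ by Lagrange. So $d_r\in G$.

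For (3), surjectivity of $\rho|_G$ requires, for each $u\in T$, an element of $G$ with top component $u$. Since $\chi$ is surjective, I would pick $r\in R$ with $\chi(r)=u^{-1}$ and take $g=(b,u)$, where $b_\omega=r$ and all other coordinates are $1_R$. Then $\varepsilon(g)=u^{-1}u=1_T$.

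For the lambda action, I would use Theorem~\ref{Connection}. Under the identification of $G$ with a regular subgroup of $\Hol(A^\Omega)$ via $\Phi$, the lambda map of the associated brace sends $g$ to the automorphism component of $\Phi(g)$, namely $\alpha_g$. The one point needing care is that the brace built in Theorem~\ref{Connection} transports the structure along the bijection $g\mapsto g\cdot 0$. In that transport, $\lambda_g$ corresponds to $h\mapsto (g\cdot 0)^{-1}(g\cdot h)$, which is exactly $\alpha_g(h)$ because $g\cdot h=(\pi(r_y))_y+\alpha_g(h)$.

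Now take $a\in A_x$, so $a$ is supported at $x$. Then $\alpha_g(a)_y=\lambda^C_{r_y}(a_{u^{-1}y})$, which is non-zero only when $u^{-1}y=x$, that is, when $y=ux$. Thus $\alpha_g(A_x)\subseteq A_{ux}$, and equality holds because $\lambda^C_{r_{ux}}$ is bijective on $A$. Transitivity on the coordinate factors then follows: $T$ acts regularly on $\Omega=T$ by left multiplication, and every $u\in T$ is realised as the top component of some $g\in G$.

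The only step that is not immediate is confirming that $\lambda^B_g$ really equals $\alpha_g$ under the identification of Theorem~\ref{Connection}. This is a routine unwinding of that identification.
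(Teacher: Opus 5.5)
Your proposal is correct and follows the paper's proof essentially line by line. It uses the same evaluations of $\varepsilon$ for (1) and (2), the same witness $(b,u)$ with $\chi(b_\omega)=u^{-1}$ for surjectivity of $\rho|_G$, and the same support argument for $\lambda^B_g(A_x)=A_{ux}$. You also check explicitly that $\lambda^B_g$ equals the automorphism component $\alpha_g$ of $\Phi(g)$ under the correspondence of Theorem~\ref{Connection}; the paper only asserts this, and your verification is correct.
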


\begin{proof}
For (1), by the definition of $b$ and of $\varepsilon$, we have $\varepsilon(b,1_T)=
\chi(r)\chi(s)=
1_T.$
Thus $(b,1_T)\in\ker\varepsilon=G$. For (2), let $r\in R$. Since $\chi(r)\in T$, we get
\begin{align*}
\varepsilon(d_r)
=
\left(\prod_{x\in\Omega}\chi(r)\right)1_T
=
\chi(r)^{|\Omega|}
=
\chi(r)^{|T|}
=
1_T
\end{align*}
where the last equality follows from Lagrange's theorem. Thus $d_r\in\ker\varepsilon=G$. For (3), let $u\in T$. Since $\chi:R\twoheadrightarrow T$ is surjective,
there exists $r\in R$ such that $\chi(r)=u^{-1}.$
Fix $\omega\in\Omega$, and define $b=(b_x)_{x\in\Omega}\in R^\Omega$ by $b_\omega=r$ and $b_x=1_R$ for all $x\neq\omega.$
Then
\begin{align*}
\varepsilon(b,u)
&=
\left(\prod_{x\in\Omega}\chi(b_x)\right)u=
\chi(r)u=
u^{-1}u=
1_T.
\end{align*}
Hence $(b,u)\in G$. Since $\rho(b,u)=u,$
and $u\in T$ was arbitrary, the restriction
$\rho|_G:G\longrightarrow T$
is surjective.
Let $g=((r_y)_{y\in\Omega},u)\in G.$
Its lambda action on $A^\Omega$ is given by
\[
\lambda^B_g((a_y)_{y\in\Omega})
=
\bigl(
\lambda^C_{r_y}(a_{u^{-1}y})
\bigr)_{y\in\Omega}.
\]
If $a=(a_y)_{y\in\Omega}\in A_x$, then $a_z=0$ for every $z\neq x$.
Hence the $y$-th coordinate of $\lambda^B_g(a)$ can be non-zero only if $u^{-1}y=x,$
that is, only if $y=ux.$
Therefore $\lambda^B_g(A_x)\subseteq A_{ux}.$
Since $\lambda^B_g$ is an automorphism of $A^\Omega$, we actually have $\lambda^B_g(A_x)=A_{ux}.$ Thus the permutation induced by $g$ on the set of coordinate factors
depends only on its top component $u$ and is given by $A_x\longmapsto A_{ux}.$
Now let $x,z\in\Omega$. Since $\Omega=T$ and $T$ acts on itself by left
multiplication, there exists $u\in T$ such that $ux=z.$
By the surjectivity of $\rho|_G:G\twoheadrightarrow T,$
there exists $g\in G$ whose top component is $u$. For this element, $\lambda^B_g(A_x)=A_z.$
Hence the lambda action of $G$ is transitive on the set of coordinate factors $\{A_x:x\in\Omega\}.$
\end{proof}

\begin{proposition}\label{prop:solvable}
Let $B=\KT_T(C,\chi)$
be the kernel--wreath skew brace, with $(B,+)=A^\Omega$ and $(B,\circ)\cong G=\ker\varepsilon\leq W=R^\Omega\rtimes T.$ If $R$ is solvable, then $G$ is solvable.
\end{proposition}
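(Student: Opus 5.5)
The plan is to show that the whole wreath product $W=R^\Omega\rtimes T$ is solvable. The conclusion then follows at once, because $G=\ker\varepsilon$ is a subgroup of $W$ and subgroups of solvable groups are solvable.

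First, the base group $R^\Omega$ is a finite direct product of copies of the solvable group $R$. It is therefore solvable: if $R^{(k)}=1$, then $(R^\Omega)^{(k)}=(R^{(k)})^\Omega=1$, since derived subgroups of a direct product are computed coordinatewise. Second, $R^\Omega$ is normal in $W$, and the quotient satisfies $W/R^\Omega\cong T$, which is abelian by hypothesis and hence solvable. An extension of a solvable group by a solvable group is solvable, so $W$ is solvable. Concretely, $W'\leq R^\Omega$ because $T$ is abelian, so $W^{(k+1)}\leq (R^\Omega)^{(k)}=1$. In particular $G\leq W$ is solvable, with derived length at most one more than that of $R$.

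There is no real obstacle here. The only point to check is that the derived series of the direct product $R^\Omega$ is computed coordinatewise, which is standard. Alternatively, one can avoid this by noting that $G\cap R^\Omega$ is a normal subgroup of $G$ contained in the solvable group $R^\Omega$, and that $G/(G\cap R^\Omega)$ embeds in $T$.
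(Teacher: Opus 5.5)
Your proposal is correct and follows the paper's proof: $W=R^\Omega\rtimes T$ is solvable because $R^\Omega$ is solvable and $W/R^\Omega\cong T$ is abelian, so its subgroup $G$ is solvable. Your derived-series bound $W^{(k+1)}\leq (R^\Omega)^{(k)}=(R^{(k)})^\Omega$ just makes explicit the step the paper states in one line.
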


\begin{proof}
The group $R^\Omega\rtimes T$ is solvable because $R$ is solvable and $T$ is abelian. Hence its subgroup $G$ is solvable.
\end{proof}

The following observation is the key to the iteration procedure.

\begin{proposition}[Preservation of abelian quotients]\label{prop:quotients}
Let $B=\KT_T(C,\chi)$
be the kernel--wreath skew brace, with $(B,+)=A^\Omega$ and $(B,\circ)\cong G=\ker\varepsilon\leq W=R^\Omega\rtimes T.$ Let $Q$ be a finite abelian group and let
$\theta:R\twoheadrightarrow Q$
be an epimorphism. Then
\[
\Theta:G\longrightarrow Q,
\qquad
\Theta((r_x)_{x\in\Omega},u)=\prod_{x\in\Omega}\theta(r_x),
\]
is an epimorphism.
\end{proposition}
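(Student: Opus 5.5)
The argument has two parts: first show that $\Theta$ is a homomorphism, using the same computation as in the proof of Theorem~\ref{thm:construction}; then show it is surjective, using the diagonal or compensated elements of Lemma~\ref{lem:elements}.

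\emph{Homomorphism.} Extend $\Theta$ to all of $W$ by the same formula, $\widetilde\Theta((r_x)_{x\in\Omega},u)=\prod_{x\in\Omega}\theta(r_x)$. The product in $W$ is $(r,u)(s,v)=(r\,{}^u s,uv)$, where ${}^u s=(s_{u^{-1}x})_{x\in\Omega}$. Since $Q$ is abelian,
\[
\prod_{x}\theta(r_x s_{u^{-1}x})=\Bigl(\prod_x\theta(r_x)\Bigr)\Bigl(\prod_x\theta(s_{u^{-1}x})\Bigr).
\]
The map $x\mapsto u^{-1}x$ is a bijection of $\Omega$, so the second factor equals $\prod_x\theta(s_x)$. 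Hence $\widetilde\Theta$ is a homomorphism on $W$, and $\Theta=\widetilde\Theta|_G$ is a homomorphism. The top component $u$ plays no role here, which is why the computation is the same as for $\varepsilon$.

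\emph{Surjectivity.} This is the only point needing care. A single-coordinate element $((q,1,\dots,1),1_T)$ with $\theta(q)=t$ need not lie in $G$, because $\chi(q)$ may be non-trivial. I would use Lemma~\ref{lem:elements}(1) to compensate. Fix distinct $\omega,\nu\in\Omega$, which exist since $T$ is non-trivial. Given $t\in Q$, choose $r\in R$ with $\theta(r)=t$, and set $s=1_R$ if that already works; in general take $s=r^{-1}c$, where $c$ is chosen so that $\chi(r)\chi(s)=1_T$. Then
\[
\Theta(b,1_T)=\theta(r)\theta(s)=t\,\theta(s),
\]
so this choice requires controlling $\theta(s)$ while also fixing $\chi(s)=\chi(r)^{-1}$.

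That simultaneous control is awkward. A cleaner route puts $(r,u)$ with $u=\chi(r)^{-1}$ in a single coordinate, exactly as in the proof of Lemma~\ref{lem:elements}(3). Take $b_\omega=r$ and $b_x=1_R$ for $x\neq\omega$. Then $\varepsilon(b,u)=\chi(r)u=1_T$, so $(b,u)\in G$, and $\Theta(b,u)=\theta(r)=t$. Since $t\in Q$ was arbitrary, $\Theta$ is surjective.

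The main obstacle is therefore only the membership in $G=\ker\varepsilon$. It is resolved by letting the top component absorb $\chi(r)$; this is legitimate because $\Theta$ ignores the top component.
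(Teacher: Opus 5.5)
Your proposal is correct and follows essentially the same route as the paper. The homomorphism check uses the same reindexing by $x\mapsto u^{-1}x$ together with commutativity of $Q$, whether you do it on $W$ or directly on $G$. Surjectivity uses the same single-coordinate element $(b,u)$ with $b_\omega=r$ and $u=\chi(r)^{-1}$. The abandoned detour through Lemma~\ref{lem:elements}(1) is unnecessary and can simply be dropped.
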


\begin{proof}
Let $g=((r_x)_{x\in\Omega},u)$ and $h=((s_x)_{x\in\Omega},v)$
be elements of $G$. Recall that multiplication in the wreath product is $gh
=
\left(
(r_xs_{u^{-1}x})_{x\in\Omega},
uv
\right).$
Therefore
\begin{align*}
\Theta(gh)
&=
\prod_{x\in\Omega}
\theta(r_xs_{u^{-1}x})=
\prod_{x\in\Omega}
\theta(r_x)\theta(s_{u^{-1}x})=
\left(\prod_{x\in\Omega}\theta(r_x)\right)
\left(\prod_{x\in\Omega}\theta(s_{u^{-1}x})\right).
\end{align*}
Since the map $x\longmapsto u^{-1}x$
is a permutation of $\Omega$, we have
\[
\prod_{x\in\Omega}\theta(s_{u^{-1}x})
=
\prod_{x\in\Omega}\theta(s_x).
\]
Hence
\[
\Theta(gh)
=
\left(\prod_{x\in\Omega}\theta(r_x)\right)
\left(\prod_{x\in\Omega}\theta(s_x)\right)
=
\Theta(g)\Theta(h).
\]
Here we use that $Q$ is abelian, so the factors may be reordered.
Thus $\Theta:G\to Q$ is a group homomorphism.
We now prove that $\Theta$ is surjective. Let $q\in Q$. Since $\theta:R\twoheadrightarrow Q$
is surjective, choose $r\in R$ such that $\theta(r)=q.$
Fix $\omega\in\Omega$ and define $b=(b_x)_{x\in\Omega}\in R^\Omega$
by $b_\omega=r$ and $b_x=1_R$ for all $x\neq\omega.$
Set $u=\chi(r)^{-1}\in T.$
Then
\begin{align*}
\varepsilon(b,u)
&=
\left(\prod_{x\in\Omega}\chi(b_x)\right)u=
\chi(r)\chi(r)^{-1}=
1_T.
\end{align*}
Hence $(b,u)\in\ker\varepsilon=G.$
Finally,
\begin{align*}
\Theta(b,u)
&=
\prod_{x\in\Omega}\theta(b_x)=
\theta(r)=
q.
\end{align*}
Since $q\in Q$ was arbitrary, $\Theta$ is surjective.
\end{proof}

For a finite group $H$, we denote by $\pi(H)$ the set of prime divisors of $|H|$.
\begin{theorem}[Iteration theorem]\label{thm:iteration}
Let $C$ be a finite skew brace and write $(C,+)=A$ and $(C,\circ)=R$.
Suppose that $R$ admits an epimorphism onto a non-trivial finite abelian
group $Q$. Then, for every integer $n\geq 2$ satisfying
$\pi(n)\subseteq\pi(Q)$, there exists a finite skew brace $B_n$ such that
$(B_n,+)\cong A^n$. Moreover, $(B_n,\circ)$ admits an epimorphism onto $Q$. If $R$ is
solvable, then $(B_n,\circ)$ is solvable.
\end{theorem}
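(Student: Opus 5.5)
We argue by induction on the number of prime factors of $n$ counted with multiplicity, applying Theorem~\ref{thm:construction} once for each prime factor. We carry along the stronger inductive statement that $(B_n,\circ)$ admits an epimorphism onto $Q$ and is solvable whenever $R$ is. The basic step is the following. Given any finite skew brace $D$ with $(D,+)\cong A^k$ and an epimorphism $\theta:(D,\circ)\twoheadrightarrow Q$, and a prime $p\in\pi(Q)$, we produce a skew brace $D'$ with $(D',+)\cong A^{kp}$ that again admits an epimorphism onto $Q$.

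For the basic step, choose $p\in\pi(Q)$. Since $Q$ is a finite abelian group and $p$ divides $|Q|$, Cauchy's theorem gives a quotient $Q\twoheadrightarrow C_p$. Concretely, $Q/pQ$ is a non-trivial elementary abelian $p$-group, and projecting it onto one cyclic factor gives the map. Composing yields an epimorphism $\chi=\eta\circ\theta:(D,\circ)\twoheadrightarrow T:=C_p$, with $T$ non-trivial and abelian. Theorem~\ref{thm:construction} applied to $(D,\chi)$ gives $D'=\KT_T(D,\chi)$ with $(D',+)\cong (A^k)^{p}\cong A^{kp}$. Proposition~\ref{prop:quotients}, applied with the original $\theta:(D,\circ)\twoheadrightarrow Q$, shows that $(D',\circ)$ again surjects onto $Q$. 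Proposition~\ref{prop:solvable} shows that $(D',\circ)$ is solvable whenever $(D,\circ)$ is.

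Now write $n=p_1p_2\cdots p_\ell$ with $\ell\ge1$ and every $p_i\in\pi(Q)$, which is possible because $\pi(n)\subseteq\pi(Q)$. Start from $D_0=C$, which has $(D_0,+)=A$, the given epimorphism $R\twoheadrightarrow Q$, and is solvable if $R$ is. Apply the basic step with $p_1$, then with $p_2$, and so on. After $i$ steps we obtain $D_i$ with $(D_i,+)\cong A^{p_1\cdots p_i}$ whose multiplicative group surjects onto $Q$, and which is solvable if $R$ is. Set $B_n=D_\ell$.

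There is no real obstacle. The only point to watch is that each step needs a \emph{fresh} epimorphism onto $Q$ for the new multiplicative group, not merely onto $C_p$; this is exactly what Proposition~\ref{prop:quotients} supplies. It is also why we keep the full quotient $Q$ throughout, rather than passing to $C_p$ and losing the other primes.
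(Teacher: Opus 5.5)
Your proposal is correct and is essentially the paper's proof. Write $n=p_1\cdots p_m$ with $p_i\in\pi(Q)$, compose the current epimorphism onto $Q$ with some $Q\twoheadrightarrow C_{p_i}$, apply the kernel--wreath construction, and use the preservation of abelian quotients and of solvability to repeat the step. One small correction: Cauchy's theorem gives a subgroup of order $p$, not a quotient, but your argument via $Q/pQ$ does supply the needed epimorphism because $Q$ is finite abelian.
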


\begin{proof}
Write $n=p_1p_2\cdots p_m$, where the primes $p_i$, not necessarily
distinct, belong to $\pi(Q)$. Let $\theta_0:R\twoheadrightarrow Q$ be an
epimorphism. Since $Q$ is finite abelian, for every $i$ there exists an
epimorphism $\eta_i:Q\twoheadrightarrow C_{p_i}$. Set $C_0=C$ and $R_0=R$. Suppose that $C_{i-1}$ has been constructed,
with multiplicative group $R_{i-1}=(C_{i-1},\circ)$ and an epimorphism
$\theta_{i-1}:R_{i-1}\twoheadrightarrow Q$. Define $\chi_i=\eta_i\theta_{i-1}:R_{i-1}\twoheadrightarrow C_{p_i}$
and put $C_i=\KT_{C_{p_i}}(C_{i-1},\chi_i).$
By Proposition~\ref{prop:quotients}, the multiplicative group
$R_i=(C_i,\circ)$ again admits an epimorphism
$\theta_i:R_i\twoheadrightarrow Q$. Hence the construction can be
iterated. At each step, $(C_i,+)\cong(C_{i-1},+)^{p_i},$
and therefore $(C_m,+)\cong A^{p_1\cdots p_m}=A^n$. Setting $B_n=C_m$
gives the desired skew brace.
Finally, if $R$ is solvable, then solvability of the multiplicative group
is preserved at every step by Proposition~\ref{prop:solvable}.
\end{proof}

\subsection{Viewing \texorpdfstring{$C$}{C} inside \texorpdfstring{$\KT_T(C,\chi)$}{KT(C,chi)}}

The construction also contains a canonical copy of the seed brace.

\begin{proposition}[Diagonal subbrace]\label{prop:diagonal}
The diagonal subgroup $\Delta(A)=\{(a)_{x\in\Omega}:a\in A\}\leq A^\Omega$
is a subbrace of $\KT_T(C,\chi)$, and $C\longrightarrow\Delta(A), a\longmapsto(a)_{x\in\Omega}$
is an isomorphism of skew braces.
\end{proposition}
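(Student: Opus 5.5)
We have to check that $\Delta(A)$ is a subgroup of $(A^\Omega,+)$ and of the circle group, and that the diagonal map respects both operations. Everything reduces to the diagonal elements $d_r$ of Lemma~\ref{lem:elements}(2). We will work through the identification of $(\KT_T(C,\chi),\circ)$ with $G\leq\Hol(A^\Omega)$ given by Theorem~\ref{Connection}. Under this identification, an element $a\in A^\Omega$ corresponds to the unique $g_a\in G$ with $g_a\cdot 0=a$. The brace operations are then
\[
a\circ b=g_a\cdot b \qquad\text{and}\qquad \lambda^B_a=\alpha_{g_a}.
\]
So the key step is to identify $g_a$ for diagonal $a$.

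\emph{Step 1.} Fix $c\in C$ and let $r\in R$ be the same element, so that $\pi(r)=c$. By Lemma~\ref{lem:elements}(2), $d_r=((r)_{x\in\Omega},1_T)$ lies in $G$. By \eqref{eq:wreath-action},
\[
d_r\cdot 0=(\pi(r)+\lambda^C_r(0))_{x\in\Omega}=(c)_{x\in\Omega}.
\]
Regularity of $G$ (Theorem~\ref{thm:construction}) then forces $g_{(c)_x}=d_r$.

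\emph{Step 2.} For $c,c'\in C$ we compute
\[
(c)_x\circ(c')_x=d_r\cdot(c')_x=\bigl(\pi(r)+\lambda^C_r(c')\bigr)_{x\in\Omega}=(c+\lambda^C_c(c'))_{x\in\Omega}=(c\circ c')_{x\in\Omega}.
\]
The last equality is the definition of $\lambda^C$. Additively, $(c)_x+(c')_x=(c+c')_x$ holds trivially. Hence $\Delta(A)$ is closed under both operations. It is finite and contains $0$, so it is a subgroup for both operations, i.e.\ a subbrace.

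\emph{Step 3.} The map $c\mapsto(c)_{x\in\Omega}$ is clearly a bijection onto $\Delta(A)$. By Step 2 it is additive and multiplicative, hence an isomorphism of skew braces. As a consistency check, $\lambda^B_{(c)_x}=\alpha_{d_r}$ acts as $\lambda^C_c$ on each coordinate, as expected.

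The only point needing care is Step 1, namely matching the brace structure from Theorem~\ref{Connection} with the concrete action \eqref{eq:wreath-action}. One must confirm that the product $a\circ b$ in the brace attached to a regular subgroup is $g_a\cdot b$, i.e.\ the standard transport of structure via the orbit map $g\mapsto g\cdot0$. With that convention fixed, the rest is direct computation.
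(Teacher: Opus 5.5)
Your proposal is correct and is essentially the paper's argument. The paper's transport bijection $\Pi:G\to A^\Omega$ is exactly your orbit map $g\mapsto g\cdot 0$. Where the paper multiplies $d_rd_s=d_{rs}$ in the wreath product and then applies $\Pi$, you evaluate $d_r\cdot(c')_{x\in\Omega}$ directly through the action formula, which amounts to the same identity.
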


\begin{proof}
By Lemma~\ref{lem:elements}, for every $r\in R$ the diagonal element $d_r=\bigl((r)_{x\in\Omega},1_T\bigr)$
belongs to $G$. Consider the map $\delta:R\longrightarrow G, \delta(r)=d_r.$
Since the top component of every $d_r$ is trivial, multiplication is
coordinatewise, and therefore $d_rd_s
=
\bigl((rs)_{x\in\Omega},1_T\bigr)
=
d_{rs}$
for all $r,s\in R$. Thus $\delta$ is a group homomorphism. It is injective,
because $d_r=d_s$
implies that the $x$-th coordinates agree for every $x\in\Omega$, and hence
$r=s$. Therefore $\delta(R)$ is a subgroup of $G$ isomorphic to $R$. 
By the correspondence between regular subgroups of holomorphs and skew braces
\cite[Theorem~4.2]{GV17}, the skew brace $B=\KT_T(C,\chi)$
has underlying set $A^\Omega$, and its multiplicative group is obtained by
transporting the group structure of $G$ through the bijection $\Pi:G\longrightarrow A^\Omega$ given by $\Pi((r_x)_{x\in\Omega},u)
=
(\pi(r_x))_{x\in\Omega}.$
Applying this to a diagonal element gives $\Pi(d_r)
=
(\pi(r))_{x\in\Omega}.$
Since $\pi:R\to A$ is bijective, it follows that
$\Pi(\delta(R))
=
\{(a)_{x\in\Omega}:a\in A\}
=
\Delta(A).$
We now verify that $\Delta(A)$ is a subbrace of $B$. First, $\Delta(A)$ is a subgroup of $(B,+)=A^\Omega$, since $(a)_{x\in\Omega}+(b)_{x\in\Omega} = (a+b)_{x\in\Omega}$ and $-(a)_{x\in\Omega} = (-a)_{x\in\Omega}.$ It remains to check that $\Delta(A)$ is a subgroup of $(B,\circ)$. Let $a,b\in A$. Since $\pi$ is bijective, there exist unique $r,s\in R$ such that $ a=\pi(r)$ and $b=\pi(s)$. Then $ (a)_{x\in\Omega}=\Pi(d_r)$  and $(b)_{x\in\Omega}=\Pi(d_s).$ Since $\Pi$ identifies $G$ with the multiplicative group $(B,\circ)$, we obtain \begin{align*} (a)_{x\in\Omega}\circ(b)_{x\in\Omega} &= \Pi(d_r)\circ\Pi(d_s)= \Pi(d_rd_s)= \Pi(d_{rs})= (\pi(rs))_{x\in\Omega} \in\Delta(A). \end{align*}Therefore $\Delta(A)$ is a subgroup of both $(B,+)$ and $(B,\circ)$, and hence it is a subbrace of $B$.
Finally, consider the diagonal map $\varphi:C\longrightarrow\Delta(A)$ given by $\varphi(a)=(a)_{x\in\Omega}.$
This map is clearly an isomorphism between the additive groups
$(C,+)=A$ and $\Delta(A)$. We now check that it also preserves the multiplicative operation. Indeed we have
\begin{align*}
\varphi(a)\circ_B\varphi(b)
&=
\Pi(d_r)\circ_B\Pi(d_s)=
\Pi(d_rd_s)=
\Pi(d_{rs})=
(\pi(rs))_{x\in\Omega}.
\end{align*}
On the other hand, since $\pi$ is the identity map on the underlying set
of $C$, we have $a\circ_C b=\pi(rs),$
and therefore $\varphi(a\circ_C b)
=
(\pi(rs))_{x\in\Omega}.$
Thus $\varphi(a\circ_C b)
=
\varphi(a)\circ_B\varphi(b).$
Hence $\varphi$ is an isomorphism of skew braces, and consequently
$\Delta(A)$ is a subbrace of $B$ isomorphic to $C$.
\end{proof}

\begin{theorem}[Infinite towers]\label{thm:tower}
Let $C$ be a finite skew brace, write $(C,+)=A$ and $(C,\circ)=R$, and
suppose that $R$ admits an epimorphism onto a non-trivial finite abelian group
$Q$. Fix $p\in\pi(Q)$. Then there exists a sequence of finite skew braces
$C=C_0,C_1,C_2,\ldots$ such that $(C_m,+)\cong A^{p^m}$ for every
$m\geq0$. Moreover, each $C_m$ embeds diagonally as a subbrace of
$C_{m+1}$, and $(C_m,\circ)$ admits an epimorphism onto $Q$ for every $m$.
If $R$ is solvable, then every $(C_m,\circ)$ is solvable.
\end{theorem}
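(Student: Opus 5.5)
The tower is built by induction on $m$, applying the kernel--wreath construction at each step with the fixed quotient $T=C_p$. First we produce a single map $\eta:Q\twoheadrightarrow C_p$. Since $Q$ is finite abelian and $p\in\pi(Q)$, $Q$ has a subgroup of index $p$: take the quotient of $Q$ by $pQ$ to get a non-trivial elementary abelian $p$-group, then project onto one cyclic factor. Set $C_0=C$ and $R_0=R$, and let $\theta_0:R\twoheadrightarrow Q$ be the given epimorphism.

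For the inductive step, suppose $C_m$ has been constructed, with multiplicative group $R_m$, an epimorphism $\theta_m:R_m\twoheadrightarrow Q$, and $(C_m,+)\cong A^{p^m}$. Put $\chi_m=\eta\theta_m:R_m\twoheadrightarrow C_p$, which is surjective as a composite of surjections. Define
\[
C_{m+1}=\KT_{C_p}(C_m,\chi_m).
\]
Theorem~\ref{thm:construction} shows that this is a finite skew brace with $(C_{m+1},+)\cong(C_m,+)^p\cong A^{p^{m+1}}$. Proposition~\ref{prop:quotients}, applied with $\theta=\theta_m$, gives an epimorphism $\theta_{m+1}:R_{m+1}\twoheadrightarrow Q$, so the induction continues. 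The diagonal embedding $C_m\hookrightarrow C_{m+1}$, $a\mapsto(a)_{x\in\Omega}$, is an isomorphism of $C_m$ onto a subbrace by Proposition~\ref{prop:diagonal}. If $R$ is solvable, then Proposition~\ref{prop:solvable} applied inductively makes every $R_m$ solvable.

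There is essentially no obstacle here, since every ingredient is an earlier result. The only point needing care is that the quotient $Q$ must be carried along at each stage with an explicit epimorphism $\theta_m$, rather than simply asserted to exist, so that $\chi_m$ is well defined. The existence of $\eta$ is standard finite abelian group theory.
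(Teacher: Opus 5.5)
Your proposal is correct and follows essentially the same route as the paper: fix an epimorphism $\eta:Q\twoheadrightarrow C_p$, iterate $\KT_{C_p}$ with $\chi_m=\eta\theta_m$, and at each step invoke the construction theorem together with the propositions on preservation of abelian quotients, the diagonal subbrace, and solvability. Carrying the epimorphisms $\theta_m$ along explicitly and producing $\eta$ via $Q/pQ$ just spells out details that the paper's proof leaves implicit.
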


\begin{proof}
Choose an epimorphism $Q\twoheadrightarrow C_p$ and iterate the
kernel--wreath construction using the quotient $C_p$ at every step.
By Proposition~\ref{prop:quotients}, the quotient $Q$ is preserved, so
the construction can be repeated indefinitely. At each step,
$(C_{m+1},+)\cong(C_m,+)^p$, and hence
$(C_m,+)\cong A^{p^m}$. The diagonal embeddings
$C_m\hookrightarrow C_{m+1}$ are given by
Proposition~\ref{prop:diagonal}. Finally, solvability of the multiplicative
groups, when present initially, is preserved by Proposition~\ref{prop:solvable}.
\end{proof}

\section{Preservation of simplicity}\label{sec:simplicity}

We now prove the main permanence result. Throughout this section let $B=\KT_T(C,\chi)$ and $(B,+)=A^\Omega$
and identify the coordinate factors with groups $A_\omega\cong A$, $\omega\in\Omega$.

\begin{lemma}\label{lem:intersection-ideal}
Assume that $C$ is simple. Let $I$ be an ideal of $B$ and put $J_\omega=I\cap A_\omega.$
Then $J_\omega$, identified with a subgroup of $A$, is an ideal of $C$ for every $\omega\in\Omega$.
\end{lemma}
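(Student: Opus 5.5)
The plan is to check the three defining conditions of an ideal for $J_\omega$, viewed as a subgroup of $A$. Each condition will be obtained by evaluating a suitable skew brace operation of $B$ on elements of $A_\omega$, using the special elements of $G$ from Lemma~\ref{lem:elements}. For the multiplicative normality I will not conjugate directly in $(B,\circ)$. Instead I will use the standard characterization (Guarnieri--Vendramin): a subgroup $I$ of a skew brace is an ideal if and only if $I\trianglelefteq (B,+)$, $\lambda_b(I)\subseteq I$ for all $b$, and $I*B\subseteq I$, where $x*y=-x+x\circ y-y=\lambda_x(y)-y$. I expect simplicity of $C$ not to be needed for this lemma.

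\emph{Additive normality.} Since $I\trianglelefteq A^\Omega$ and $A_\omega\trianglelefteq A^\Omega$, their intersection $J_\omega$ is normal in $A^\Omega$. Conjugating by elements of $A_\omega$ then shows that $J_\omega\trianglelefteq A$ after identifying $A_\omega$ with $A$.

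\emph{$\lambda$-invariance.} Fix $r\in R$. Since $T\neq 1$, there is some $\nu\neq\omega$. Since $\chi$ is surjective, there is $s\in R$ with $\chi(s)=\chi(r)^{-1}$. Lemma~\ref{lem:elements}(1) gives the compensated element $g=(b,1_T)\in G$ with $b_\omega=r$, $b_\nu=s$ and all other coordinates trivial. Its top component is trivial, so by the formula for $\lambda^B_g$ it maps $A_\omega$ to itself and acts there as $\lambda^C_r$. Because $\lambda^B_g(I)=I$, we get $\lambda^C_r(J_\omega)\subseteq J_\omega$.

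\emph{The $*$-condition.} This is the step needing care, because an element $j\in A_\omega$ corresponds to an element of $G$ whose top component is generally non-trivial. Let $a\in J_\omega$ and let $j\in A_\omega$ have $\omega$-coordinate $a$. The preimage of $j$ in $G$ is $\bigl((r_y)_y,u\bigr)$ with $r_\omega=\pi^{-1}(a)$, $r_y=1_R$ for $y\neq\omega$, and $u=\chi(r_\omega)^{-1}$. For $c\in A$, let $d=(c)_{x\in\Omega}$ be the diagonal element. Since $d$ is constant, the coordinate permutation by $u$ does not affect it. So $\lambda^B_j(d)$ has $\omega$-coordinate $\lambda^C_a(c)$ and every other coordinate equal to $c$. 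Hence $j*d=\lambda^B_j(d)-d$ has $\omega$-coordinate $\lambda^C_a(c)-c$ and all other coordinates $0$. Since $I$ is an ideal, $j*d\in I\cap A_\omega$, and therefore $a*_C c=\lambda^C_a(c)-c\in J_\omega$.

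By the characterization above, $J_\omega$ is an ideal of $C$. The main point to verify carefully is the computation of $\lambda^B_j$ on a diagonal element despite the non-trivial top component $u$. The observation that diagonal vectors are fixed by coordinate permutations is what makes that computation go through.
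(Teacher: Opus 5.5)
Your proof is correct. The first two steps (additive normality, and $\lambda$-invariance via the compensated element $(b,1_T)$) are the same as in the paper. The routes differ for multiplicative normality.

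The paper verifies $\pi^{-1}(J_\omega)\trianglelefteq R$ directly from the definition of an ideal. It takes $g=\Pi^{-1}(\iota_\omega(a))=(b,u)$ and conjugates it in the wreath product by the diagonal group elements $d_r=((r)_{y\in\Omega},1_T)$. Using ${}^u(r^{-1})_{y\in\Omega}=(r^{-1})_{y\in\Omega}$, it obtains $\Pi(d_rgd_r^{-1})=\iota_\omega(\pi(rxr^{-1}))\in I$.

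You instead use the characterization of ideals by $I*B\subseteq I$, and evaluate $\lambda^B_j$ on diagonal vectors of $(B,+)$. The key fact is the same in both arguments: diagonal objects are fixed by the coordinate permutation coming from the non-trivial top component $u$. The paper applies it inside $G$, you apply it inside $A^\Omega$.

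What your route buys: it needs only the formula for $\lambda^B_g$ already recorded in Lemma~\ref{lem:elements}(3), with no multiplication in $W$. It also makes explicit the top component $u=\chi(r_\omega)^{-1}$, which the paper leaves implicit. The price is an external characterization, but it is elementary. For $x\in I$ one has $a\circ I=a+\lambda_a(I)=I+a$ and $x\circ a=x+(x*a)+a$. Hence $I\circ a\subseteq a\circ I$ for all $a$ if and only if $I*B\subseteq I$, and you use both directions of this (in $B$ and in $C$).

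What the paper's route buys: it is self-contained and gives normality in $(C,\circ)$ directly.

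You are also right that simplicity of $C$ is not needed; the paper's proof does not use it either.
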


\begin{proof}
Since $I\trianglelefteq (B,+)=A^\Omega$, we have $J_\omega=I\cap A_\omega\trianglelefteq A_\omega\cong A.$
We first show that $J_\omega$ is invariant under the lambda action of $C$. Fix $r\in R$. Choose $\nu\in\Omega$, with $\nu\neq\omega$, and choose
$s\in R$ such that $\chi(s)=\chi(r)^{-1}.$
Let $c_{r,s}\in R^\Omega$ be defined by $(c_{r,s})_\omega=r,$ $(c_{r,s})_\nu=s$ and $(c_{r,s})_x=1_R$ for $x\notin\{\omega,\nu\}.$
By Lemma~\ref{lem:elements}, $(c_{r,s},1_T)\in G.$ Recall that the lambda action of an element
$g=((r_x)_{x\in\Omega},u)\in G$
on $A^\Omega$ is given by $\lambda^B_g((a_x)_{x\in\Omega})
=
\bigl(\lambda^C_{r_x}(a_{u^{-1}x})\bigr)_{x\in\Omega}.$
Since the top component of $(c_{r,s},1_T)$ is $1_T$, this element does not
permute the coordinates. In particular, it preserves the factor $A_\omega$,
and its restriction to $A_\omega$ is precisely $\lambda^C_r$. Hence $\lambda^C_r(J_\omega)=J_\omega,$
because $I$ is invariant under the lambda action of $B$. Since $r\in R$ was
arbitrary, $J_\omega$ is invariant under the whole lambda action $\lambda^C:R\longrightarrow\Aut(A).$
Since $J_\omega$ is an additive subgroup and is invariant under every
$\lambda^C_r$, it is closed under the multiplicative operation of $C$; as
$C$ is finite, $J_\omega$ is therefore a subbrace of $C$.
It remains to prove that $J_\omega$ is normal in the multiplicative group
of $C$, or equivalently that $\pi^{-1}(J_\omega)\trianglelefteq R.$ Let $a\in J_\omega$ and put $x=\pi^{-1}(a)\in R.$
Let $\iota_\omega(a)\in A^\Omega$
denote the element having entry $a$ in the $\omega$-coordinate and $0$
in every other coordinate. Since $a\in J_\omega$, we have $\iota_\omega(a)\in I.$ Let
$g=\Pi^{-1}(\iota_\omega(a))\in G.$
Since $\Pi((r_x)_{x\in\Omega},u)
=
(\pi(r_x))_{x\in\Omega},$
the base component of $g$ is
\[
b=(b_y)_{y\in\Omega},
\qquad
b_\omega=x,\qquad
b_y=1_R\quad(y\neq\omega).
\]
Thus $g=(b,u)$
for some $u\in T$.
Now fix $r\in R$ and consider the diagonal element $d_r=\bigl((r)_{y\in\Omega},1_T\bigr)\in G.$
Since $I\trianglelefteq(B,\circ)$ and $g$ corresponds to an element of $I$,
we have $d_rgd_r^{-1}\in I.$ Let us compute this conjugate in the wreath product. Since every diagonal
vector is fixed by the permutation action of $T$, ${}^u(r^{-1})_{y\in\Omega}
=
(r^{-1})_{y\in\Omega}.$
Therefore
\begin{align*}
d_rgd_r^{-1}
&=
\bigl((r)_{y\in\Omega},1_T\bigr)
(b,u)
\bigl((r^{-1})_{y\in\Omega},1_T\bigr)=
\left(
(r b_y r^{-1})_{y\in\Omega},
u
\right).
\end{align*}
Since $b_y=1_R$ for $y\neq\omega$, the base component of this conjugate is
trivial outside the $\omega$-coordinate, while its $\omega$-coordinate is
$rxr^{-1}$. Applying $\Pi$, we obtain
\[
\Pi(d_rgd_r^{-1})=\iota_\omega\bigl(\pi(rxr^{-1})\bigr).
\]
Since $d_rgd_r^{-1}\in I$, it follows that $\pi(rxr^{-1})\in J_\omega.$
Thus
\[
r\,\pi^{-1}(J_\omega)\,r^{-1}
\subseteq
\pi^{-1}(J_\omega)
\qquad
\text{for every }r\in R.
\]
Applying the same argument to $r^{-1}$ gives equality, and hence $\pi^{-1}(J_\omega)\trianglelefteq R.$ Therefore $J_\omega$ is normal in $(C,+)$, invariant under the lambda action
of $C$, and normal in $(C,\circ)$. Hence $J_\omega$ is an ideal of $C$.
\end{proof}

\begin{corollary}\label{cor:zero-intersections}
Assume that $C$ is simple. If $I$ is a proper ideal of $B$, then $I\cap A_\omega=0$ with $\omega\in\Omega$.
\end{corollary}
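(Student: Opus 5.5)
The plan is to use Lemma~\ref{lem:intersection-ideal} together with the transitivity of the lambda action on coordinate factors from Lemma~\ref{lem:elements}(3), and to argue by contradiction. Let $I$ be a proper ideal of $B$. By Lemma~\ref{lem:intersection-ideal}, each $J_\omega=I\cap A_\omega$ is an ideal of $C$. Since $C$ is simple, for each $\omega$ either $J_\omega=0$ or $J_\omega=A_\omega$. The aim is to rule out the second alternative for every $\omega$.

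First I will show that the alternative chosen does not depend on $\omega$. Take $\omega,\omega'\in\Omega$. By Lemma~\ref{lem:elements}(3) there is $g\in G$ with $\lambda^B_g(A_\omega)=A_{\omega'}$. Since $I$ is $\lambda^B$-invariant, we get
\[
\lambda^B_g(I\cap A_\omega)=I\cap A_{\omega'}.
\]
Hence $J_\omega=A_\omega$ holds for one $\omega$ if and only if $J_{\omega'}=A_{\omega'}$ for all $\omega'$.

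Now suppose, for contradiction, that $J_\omega=A_\omega$ for every $\omega$. Then $I$ contains every coordinate factor $A_\omega$. Because $I$ is an additive subgroup and these factors generate $A^\Omega$, it follows that $I\supseteq\sum_\omega A_\omega=A^\Omega=B$. This contradicts the properness of $I$. Therefore $J_\omega=0$ for every $\omega\in\Omega$, which is the claim.

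There is no serious obstacle. The only point needing care is the equality $\lambda^B_g(I\cap A_\omega)=I\cap A_{\omega'}$. This holds because $\lambda^B_g$ is a bijection preserving $I$ (we have $\lambda^B_g(I)=I$) and mapping $A_\omega$ onto $A_{\omega'}$, so it maps their intersection onto the intersection of their images.
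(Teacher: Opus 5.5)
Your proposal is correct and follows essentially the same route as the paper. Both arguments use Lemma~\ref{lem:intersection-ideal} to get that each $I\cap A_\omega$ is $0$ or $A_\omega$, then the transitivity in Lemma~\ref{lem:elements}(3) to show that one full intersection forces all coordinate factors into $I$, hence $I=B$. You spell out slightly more explicitly why $\lambda^B_g(I\cap A_\omega)=I\cap A_{\omega'}$.
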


\begin{proof}
By Lemma~\ref{lem:intersection-ideal}, each $I\cap A_\omega$ is either $0$ or $A_\omega$. By Lemma~\ref{lem:elements}(3), the lambda action of $G$ is transitive on the coordinate factors. Hence if one intersection is full, then every coordinate factor lies in $I$, so $I=B$.
\end{proof}

\begin{lemma}\label{lem:projection-ideal}
Assume that $C$ is simple and let $I$ be a proper ideal of $B$. Then $I\leq Z(A)^\Omega.$
Moreover, for each $\omega\in\Omega$, the projection $H_\omega=\pr_\omega(I)\leq A$
is an ideal of $C$.
\end{lemma}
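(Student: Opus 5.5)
The plan is in two parts: first use Corollary~\ref{cor:zero-intersections} together with additive normality to force $I$ into the centre, then redo the argument of Lemma~\ref{lem:intersection-ideal} with projections in place of intersections.

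\emph{Step 1: $I\leq Z(A)^\Omega$.} Let $i\in I$ and $a\in A_\omega$ for some $\omega\in\Omega$. Consider the additive commutator $[i,a]=-i-a+i+a$ in $A^\Omega$.
\begin{itemize}
\item Since $I\trianglelefteq(B,+)$, we have $-a+i+a\in I$, so $[i,a]\in I$.
\item Since $A_\omega\trianglelefteq A^\Omega$, we have $-i-a+i\in A_\omega$, so $[i,a]\in A_\omega$.
\end{itemize}
Hence $[i,a]\in I\cap A_\omega$, which is $0$ by Corollary~\ref{cor:zero-intersections}. Thus $i$ commutes with every $A_\omega$. Since the $A_\omega$ generate $A^\Omega$, this gives $i\in Z(A^\Omega)=Z(A)^\Omega$.

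\emph{Step 2: $H_\omega=\pr_\omega(I)$ is an ideal of $C$.} We check the three ideal conditions in turn.
\begin{itemize}
\item \emph{Additive normality.} $H_\omega$ is an additive subgroup of $A$, since $\pr_\omega$ is a homomorphism. It is normal in $A$ because $\pr_\omega$ is surjective and $I\trianglelefteq A^\Omega$. (By Step~1 it is in fact central.)
\item \emph{Lambda invariance.} Fix $r\in R$ and choose $\nu\neq\omega$ and $s$ with $\chi(s)=\chi(r)^{-1}$. Take the compensated element $(c_{r,s},1_T)\in G$ from Lemma~\ref{lem:elements}(1). Its lambda action does not move coordinates and acts on the $\omega$-coordinate by $\lambda^C_r$. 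So for $i\in I$ we get $\pr_\omega(\lambda^B_{(c_{r,s},1_T)}(i))=\lambda^C_r(i_\omega)$. This lies in $H_\omega$ because $I$ is $\lambda^B$-invariant. Hence $\lambda^C_r(H_\omega)\subseteq H_\omega$ for all $r\in R$.
\end{itemize}

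\emph{Multiplicative normality.} We show $\pi^{-1}(H_\omega)\trianglelefteq R$. Take $i\in I$ and put $g=\Pi^{-1}(i)=(b,u)\in G$, so that $\pi(b_y)=i_y$ for every $y$. Since $I\trianglelefteq(B,\circ)$, the set $\Pi^{-1}(I)$ is normal in $G$. Therefore the conjugate by the diagonal element $d_r$ of Lemma~\ref{lem:elements}(2) stays in $\Pi^{-1}(I)$. As in the proof of Lemma~\ref{lem:intersection-ideal}, since diagonal vectors are fixed by the top group,
\[
d_rgd_r^{-1}=\bigl((rb_yr^{-1})_{y\in\Omega},u\bigr).
\]
Applying $\Pi$ and projecting onto the $\omega$-coordinate gives $\pi(rb_\omega r^{-1})\in H_\omega$. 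Since $b_\omega=\pi^{-1}(i_\omega)$ ranges over all of $\pi^{-1}(H_\omega)$ as $i$ ranges over $I$, we get $r\,\pi^{-1}(H_\omega)\,r^{-1}\subseteq\pi^{-1}(H_\omega)$. Applying this with $r^{-1}$ as well gives normality.

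Combining the three conditions, $H_\omega$ is an ideal of $C$. The only real care needed is in the multiplicative step: one must use normality of $\Pi^{-1}(I)$ in $G$ rather than the additive structure, and the same diagonal-conjugation computation as before then goes through verbatim, with projection replacing intersection.
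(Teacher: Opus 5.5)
Your proposal is correct and follows the paper's proof essentially step for step. Both arguments use the same three devices: the additive commutator with $A_\omega$ landing in $I\cap A_\omega=0$ to get centrality, the compensated base elements $(c_{r,s},1_T)$ for $\lambda$-invariance, and conjugation by diagonal elements $d_r$ for multiplicative normality. The one line you leave implicit is that $\pi^{-1}(H_\omega)$ is actually a subgroup of $R$; the paper checks this via $\pi(xy)=\pi(x)+\lambda^C_x(\pi(y))$, and it follows at once from your $\lambda$-invariance step.
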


\begin{proof}
Let $z=(z_\gamma)_{\gamma\in\Omega}\in I$ and fix $\omega\in\Omega$.
We first show that the $\omega$-coordinate of every element of $I$
belongs to $Z(A)$. Take $c\in A_\omega$. Since $I\trianglelefteq(B,+)=A^\Omega$, we have
$[z,c]_+\in I$. Moreover, the group operation in $A^\Omega$ is
coordinatewise, so $[z,c]_+$ is trivial outside the $\omega$-coordinate,
while its $\omega$-coordinate is $[z_\omega,c_\omega]_+$. Hence
$[z,c]_+\in I\cap A_\omega$. By
Corollary~\ref{cor:zero-intersections}, $I\cap A_\omega=0$, and therefore
$[z_\omega,c_\omega]_+=0$. Since $c_\omega\in A$ is arbitrary, we obtain
$z_\omega\in Z(A)$. As this holds for every $\omega\in\Omega$ and every
$z\in I$, it follows that $I\leq Z(A)^\Omega.$
Now put $H_\omega=\pr_\omega(I)$. Since $I$ is an additive subgroup of
$A^\Omega$, $H_\omega$ is an additive subgroup of $A$. By the previous
paragraph, $H_\omega\leq Z(A)$, and hence $H_\omega\trianglelefteq A$. We next prove that $H_\omega$ is invariant under the lambda action of $C$.
Let $a\in H_\omega$ and choose $z=(z_\gamma)_{\gamma\in\Omega}\in I$
with $z_\omega=a$. Fix $r\in R$. Choose $\nu\neq\omega$ and
$s\in R$ such that $\chi(s)=\chi(r)^{-1}$. Let
$c_{r,s}\in R^\Omega$ be the element with entries $r$ at $\omega$,
$s$ at $\nu$, and $1_R$ elsewhere. By Lemma~\ref{lem:elements},
$g_{r,s}:=(c_{r,s},1_T)\in G$. Since the top component of $g_{r,s}$ is $1_T$, its lambda action does not
permute the coordinates. In particular,
\[
\pr_\omega\bigl(\lambda^B_{g_{r,s}}(z)\bigr)
=
\lambda^C_r(z_\omega)
=
\lambda^C_r(a).
\]
Since $I$ is lambda-invariant, $\lambda^B_{g_{r,s}}(z)\in I$, and hence
$\lambda^C_r(a)\in H_\omega$. Thus
$\lambda^C_r(H_\omega)\subseteq H_\omega$. Applying the same argument to
$r^{-1}$ gives $\lambda^C_r(H_\omega)=H_\omega
\qquad\text{for every }r\in R.$
It follows that $H_\omega$ is a subbrace of $C$. Equivalently,
$\pi^{-1}(H_\omega)$ is a subgroup of $R$. Indeed, if
$x,y\in\pi^{-1}(H_\omega)$, then by the crossed-homomorphism identity $\pi(xy)=\pi(x)+\lambda^C_x(\pi(y))\in H_\omega.$
Thus $\pi^{-1}(H_\omega)$ is closed under multiplication, and since $R$
is finite, it is a subgroup of $R$.
It remains to prove multiplicative normality. Let $a\in H_\omega$ and
choose $z\in I$ with $z_\omega=a$. Write
$\Pi^{-1}(z)=((x_\gamma)_{\gamma\in\Omega},u)\in G.$
By the definition of $\Pi$, we have $z_\gamma=\pi(x_\gamma)$ for every
$\gamma\in\Omega$, and in particular $x_\omega=\pi^{-1}(a)$.
Fix $r\in R$ and consider the diagonal element
$d_r=((r)_{\gamma\in\Omega},1_T)\in G$. Since
$I\trianglelefteq(B,\circ)$, conjugating $\Pi^{-1}(z)$ by $d_r$ gives
again an element corresponding to $I$. Since diagonal base vectors are
fixed by the permutation action of $T$, we have
\[
d_r\,\Pi^{-1}(z)\,d_r^{-1}
=
\bigl((rx_\gamma r^{-1})_{\gamma\in\Omega},u\bigr).
\]
Applying $\Pi$, we obtain an element of $I$ whose $\omega$-coordinate is
$\pi(rx_\omega r^{-1})$. Hence
$\pi(rx_\omega r^{-1})\in H_\omega.$
Since $x_\omega=\pi^{-1}(a)$ and $a\in H_\omega$ was arbitrary, we get
$r\pi^{-1}(H_\omega)r^{-1}\subseteq\pi^{-1}(H_\omega)$ for every
$r\in R$. Applying the same argument to $r^{-1}$ yields equality, so $\pi^{-1}(H_\omega)\trianglelefteq R.$
Therefore $H_\omega$ is normal in $(C,+)$, invariant under the lambda
action of $C$, and normal in $(C,\circ)$. Hence $H_\omega$ is an ideal
of $C$.
\end{proof}

\begin{theorem}[Kernel--wreath permanence theorem]\label{thm:main}
Let $C$ be a finite simple skew brace which is not a trivial skew brace. Let $\chi:(C,\circ)\twoheadrightarrow T$
be an epimorphism onto a non-trivial finite abelian group. Then $\KT_T(C,\chi)$
is simple.
\end{theorem}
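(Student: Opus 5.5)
Let $I$ be a non-zero ideal of $B=\KT_T(C,\chi)$, and suppose for a contradiction that $I\neq B$. The plan is to use the two lemmas on intersections and projections to force $C$ to be a brace with trivial socle. We then show that $\ker\chi$ acts trivially through $\lambda^C$. This makes $(C,\circ)$ abelian, which contradicts the rigidity result \cite[Corollary~3.2]{DameleErcan26}.

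\emph{Step 1: additive structure.} By Lemma~\ref{lem:projection-ideal}, $I\leq Z(A)^\Omega$ and every projection $H_\omega=\pr_\omega(I)$ is an ideal of $C$. Since $I\neq0$, some $H_\omega$ is non-zero, and simplicity of $C$ gives $H_\omega=A$. Because $H_\omega\leq Z(A)$, the group $A$ is abelian, so $C$ is a brace. Moreover $\Soc(C)=Z(A)\cap\ker\lambda^C=\ker\lambda^C$. Since $C$ is simple and not trivial, $\Soc(C)=0$, so $\lambda^C$ is faithful. Corollary~\ref{cor:zero-intersections} also gives $I\cap A_x=0$ for every $x\in\Omega$.

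\emph{Step 2: $\ker\chi\leq\ker\lambda^C$.} This is the key step. Fix $\nu\neq\omega$ in $\Omega$, which is possible since $T\neq1$. Fix $r\in R$, and take $s,s'\in R$ with $\chi(s)=\chi(s')=\chi(r)^{-1}$. By Lemma~\ref{lem:elements}(1), the elements $g_{r,s}=(c_{r,s},1_T)$ and $g_{r,s'}=(c_{r,s'},1_T)$ lie in $G$. For $z\in I$, both $\lambda^B_{g_{r,s}}(z)$ and $\lambda^B_{g_{r,s'}}(z)$ lie in $I$, so their difference lies in $I$. This difference vanishes at every coordinate except $\nu$, where it equals $\lambda^C_s(z_\nu)-\lambda^C_{s'}(z_\nu)$. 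Hence it lies in $I\cap A_\nu=0$. By Step 1 we have $\pr_\nu(I)=H_\nu=A$, using transitivity from Lemma~\ref{lem:elements}(3) if needed. Therefore $\lambda^C_s=\lambda^C_{s'}$ whenever $\chi(s)=\chi(s')$; taking $s'=s k$ with $k\in\ker\chi$ gives $\lambda^C_k=\id$. So $\ker\chi\leq\ker\lambda^C=1$.

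\emph{Step 3: conclusion.} By Step 2, $\chi$ is injective, so $R=(C,\circ)\cong T$ is abelian, and in particular nilpotent. Thus $C$ is a finite simple skew brace with nilpotent multiplicative group. By \cite[Corollary~3.2]{DameleErcan26}, such a brace is trivial, which contradicts the hypothesis. Hence $I=B$, and $B$ is simple.

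I expect Step 2 to be the main obstacle. The difficulty is to find elements of $I$ supported on a single coordinate, so that $I\cap A_\nu=0$ yields information. Comparing two compensating elements with the same $\chi$-value does this, and it also needs $H_\nu=A$ from Step 1. The only external input is the exact form of the cited rigidity statement. If that corollary were weaker than stated here, I would fall back on a direct argument: a finite simple brace whose multiplicative group is abelian, with faithful $\lambda$, has no such structure beyond order $p$.
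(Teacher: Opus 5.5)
Your proof is correct and follows the paper's argument essentially step for step: the projections force $A=Z(A)$ and $\Soc(C)=\ker\lambda^C=0$; single-coordinate-supported elements of $I$, together with $I\cap A_\nu=0$, give $\ker\chi\leq\ker\lambda^C$; and then $R\cong T$ is abelian, contradicting \cite[Corollary~3.2]{DameleErcan26}. The only difference is cosmetic and occurs in Step 2: the paper directly uses the base element with entry $k\in\ker\chi$ at the coordinate $\omega$ where $H_\omega=A$ (it lies in $G$ because $\chi(k)=1$), which avoids comparing $g_{r,s}$ with $g_{r,s'}$ and avoids transferring $H_\omega=A$ to another coordinate.
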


\begin{proof}
Write $(C,+)=A$ and $(C,\circ)=R$, and put $B=\KT_T(C,\chi)$. Let $I$ be a proper ideal of $B$. We prove that $I=0$. Suppose, for a contradiction, that $I\ne0$. Then $H_\omega=\pr_\omega(I)$ is non-zero for some $\omega$. By Lemma~\ref{lem:projection-ideal}, $H_\omega$ is an ideal of the simple skew brace $C$, so $H_\omega=A$. The same lemma gives $H_\omega\leq Z(A)$; hence $A=Z(A)$ and $C$ is a brace. Put $K=\ker\chi$. Let $k\in K$. The base element having entry $k$ in the $\omega$-coordinate and identity elsewhere belongs to $G$. Its lambda-map acts as $\lambda^C_k$ on $A_\omega$ and trivially on all other coordinates. Since $H_\omega=A$, for every $a\in A$ there exists $z\in I$ with $z_\omega=a$. The difference between $z$ and its image under this lambda-map belongs to $I$ and is supported only at $\omega$. Corollary~\ref{cor:zero-intersections} therefore gives $\lambda^C_k(a)=a$ for every $a\in A$.
Thus $K\leq\ker\lambda^C$. Since $A$ is abelian,
$\Soc(C)=\ker\lambda^C$. The socle is an ideal of $C$, and it cannot be
all of $C$, since otherwise $C$ would be trivial. Hence the simplicity of $C$
gives $\Soc(C)=0$, and therefore $K=1$.
Therefore $R\cong T$ is abelian, and hence nilpotent. By
\cite[Corollary~3.2]{DameleErcan26}, the finite simple skew brace $C$ is
isomorphic to $\operatorname{Triv}(C_p)$ for some prime $p$, contrary to the
hypothesis. Therefore $I=0$, and $B$ is simple.
\end{proof}

\begin{corollary}[Simple iteration]\label{cor:simple-iteration}
Under the hypotheses of Theorem~\ref{thm:iteration}, assume in addition that
$C$ is simple and is not a trivial skew brace. Then the skew braces $B_n$ may
be chosen to be simple.
\end{corollary}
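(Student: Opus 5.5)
The plan is to rerun the construction of Theorem~\ref{thm:iteration} and apply Theorem~\ref{thm:main} at every step. The induction hypothesis must be "$C_{i-1}$ is simple and not trivial". So besides simplicity, we also need each new brace to be non-trivial; this extra check is the only new ingredient.

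Write $n=p_1\cdots p_m$ with $p_i\in\pi(Q)$. As in the proof of Theorem~\ref{thm:iteration}, set $C_0=C$ and fix $\theta_0:R\twoheadrightarrow Q$ and $\eta_i:Q\twoheadrightarrow C_{p_i}$. Put
\[
C_i=\KT_{C_{p_i}}(C_{i-1},\chi_i),\qquad \chi_i=\eta_i\theta_{i-1},
\]
where $\theta_i$ is the epimorphism supplied by Proposition~\ref{prop:quotients}. Each $\chi_i$ is an epimorphism onto the non-trivial abelian group $C_{p_i}$. Hence, if $C_{i-1}$ is simple and not trivial, Theorem~\ref{thm:main} shows that $C_i$ is simple. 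The conclusions $(B_n,+)\cong A^n$, the quotient $Q$, and solvability are exactly as in Theorem~\ref{thm:iteration}.

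It remains to show that $C_i$ is not a trivial skew brace. The cleanest route is Proposition~\ref{prop:diagonal}: $C_{i-1}$ embeds as a subbrace $\Delta$ of $C_i$, and the lambda map of $C_i$ restricts on $\Delta$ to the lambda map of the copy of $C_{i-1}$. Indeed, for $d=\varphi(a)$ and $e=\varphi(b)$ in $\Delta$,
\[
\lambda^{C_i}_d(e)=-d+d\circ e=\varphi(-a+a\circ b)=\varphi\bigl(\lambda^{C_{i-1}}_a(b)\bigr),
\]
because $\varphi$ preserves both operations. Since $C_{i-1}$ is not trivial, some $\lambda^{C_{i-1}}_a\neq\id$. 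Then $\lambda^{C_i}_{\varphi(a)}\neq\id$, so $C_i$ is not trivial.

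Alternatively, one can argue directly: an element $d_r$ with $\lambda^C_r\neq\id$ acts coordinatewise by $\lambda^C_r$, hence non-trivially.

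By induction, every $C_i$ is simple and non-trivial, and we take $B_n=C_m$. No step is a real obstacle; the only point needing care is carrying the non-triviality hypothesis through the induction, which the diagonal subbrace handles.
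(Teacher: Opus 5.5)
Your proof is correct. The simplicity part is the same as the paper's: rerun the iteration of Theorem~\ref{thm:iteration} and apply Theorem~\ref{thm:main} inductively, with the hypothesis ``simple and not trivial''.

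You differ only in how you show that $C_i$ is not trivial. The paper argues after simplicity is known. $(C_i,+)\cong(C_{i-1},+)^{p_i}$ with $p_i\geq2$ has a non-zero proper normal direct factor. In a trivial skew brace every normal subgroup of the additive group is an ideal, so triviality would contradict the simplicity just obtained. You instead show that non-triviality is inherited from $C_{i-1}$, either through the diagonal subbrace of Proposition~\ref{prop:diagonal} or by noting that the lambda map of $d_r$ acts as $\lambda^C_r$ in every coordinate.

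Your route does not use simplicity of $C_i$, so it shows more generally that $\KT_T(C,\chi)$ is non-trivial whenever $C$ is. The paper's route needs no lambda computation, only that $C_i$ is simple and its additive group is a proper direct power. Both are complete.
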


\begin{proof}
Use the sequence $C_0,C_1,\ldots,C_m$ constructed in the proof of
Theorem~\ref{thm:iteration}. We prove inductively that every $C_i$ is simple
and is not a trivial skew brace. This holds for $C_0=C$. If it holds for
$C_{i-1}$, then Theorem~\ref{thm:main} shows that
$C_i=\KT_{C_{p_i}}(C_{i-1},\chi_i)$ is simple. Moreover,
$(C_i,+)\cong(C_{i-1},+)^{p_i}$ with $p_i\geq2$. Hence $(C_i,+)$ has a
non-zero proper normal direct factor. If $C_i$ were trivial, this factor would
be an ideal of $C_i$, contradicting simplicity. Thus $C_i$ is not trivial,
and the induction continues. In particular, $B_n=C_m$ is simple.
\end{proof}

\begin{corollary}[Simple infinite towers]\label{cor:simple-tower}
Under the hypotheses of Theorem~\ref{thm:tower}, assume in addition that
$C$ is simple and is not a trivial skew brace. Then the sequence in
Theorem~\ref{thm:tower} may be chosen so that every $C_m$ is simple. If
$(C,\circ)$ is solvable, then every $(C_m,\circ)$ is solvable.
\end{corollary}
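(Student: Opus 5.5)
The plan is to reuse the tower built in the proof of Theorem~\ref{thm:tower}, exactly as Corollary~\ref{cor:simple-iteration} reused the sequence from Theorem~\ref{thm:iteration}. Fix $p\in\pi(Q)$ and an epimorphism $\eta:Q\twoheadrightarrow C_p$. Define $C_0=C$ and $\theta_0:R\twoheadrightarrow Q$. Given $C_m$ with $\theta_m:(C_m,\circ)\twoheadrightarrow Q$, put
\[
\chi_{m+1}=\eta\theta_m \qquad\text{and}\qquad C_{m+1}=\KT_{C_p}(C_m,\chi_{m+1}).
\]
Proposition~\ref{prop:quotients} then supplies $\theta_{m+1}$, so the recursion continues. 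This is the tower of Theorem~\ref{thm:tower}, with the same additive groups $A^{p^m}$, the same diagonal embeddings from Proposition~\ref{prop:diagonal}, and the same $Q$-quotients.

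I will prove by induction on $m$ that each $C_m$ is simple and is not a trivial skew brace. The base case $m=0$ is the hypothesis.

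For the inductive step, assume $C_m$ is simple and not trivial. Since $\chi_{m+1}$ is an epimorphism onto the non-trivial abelian group $C_p$, Theorem~\ref{thm:main} shows that $C_{m+1}$ is simple.

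It remains to show that $C_{m+1}$ is not trivial; this is the only point requiring care, because Theorem~\ref{thm:main} needs a non-trivial seed at the next step. I argue as in Corollary~\ref{cor:simple-iteration}. We have $(C_{m+1},+)\cong(C_m,+)^p$ with $p\ge 2$ and $(C_m,+)\neq0$. Hence a single coordinate factor $A_\omega$ is a non-zero proper normal subgroup of the additive group.

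If $C_{m+1}$ were trivial, then $\lambda=\id$ and $\circ=+$. Every normal subgroup of the additive group would then be an ideal, and in particular $A_\omega$ would be an ideal, contradicting simplicity. So $C_{m+1}$ is not trivial, which completes the induction.

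Finally, if $(C,\circ)$ is solvable, Proposition~\ref{prop:solvable} applied at each step shows that every $(C_m,\circ)$ is solvable. No genuine obstacle is expected: the only subtle point is keeping the non-triviality hypothesis alive at each stage of the induction, and the additive-decomposition argument above handles it.
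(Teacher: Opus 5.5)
Your proposal is correct and is essentially the paper's proof. The paper likewise runs the induction of Corollary~\ref{cor:simple-iteration} on the tower of Theorem~\ref{thm:tower}: simplicity comes from Theorem~\ref{thm:main}, non-triviality from the non-zero proper additive direct factor, and solvability is inherited via Proposition~\ref{prop:solvable}. You simply spell out the recursion $\chi_{m+1}=\eta\theta_m$ explicitly.
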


\begin{proof}
Apply the same induction as in Corollary~\ref{cor:simple-iteration} to the
sequence constructed in the proof of Theorem~\ref{thm:tower}. Solvability is
already part of Theorem~\ref{thm:tower}.
\end{proof}

\begin{corollary}\label{cor:solvable-seed}
Let $C$ be a finite simple skew brace which is not a trivial skew brace. If
$(C,\circ)$ is solvable, then $C$ is the first term of an infinite tower of
finite simple skew braces with solvable multiplicative groups.
\end{corollary}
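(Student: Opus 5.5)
The plan is to reduce the statement to Corollary~\ref{cor:simple-tower}. The only missing hypothesis is a non-trivial finite abelian quotient of $R=(C,\circ)$, and solvability will supply one.

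First, I check that $R$ is non-trivial. If $R=1$, then $C$ is the zero skew brace. The zero brace is not simple by definition, since simplicity requires $C\neq 0$; alternatively, it is trivial. Either way this contradicts the hypotheses, so $|R|>1$.

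Next, since $R$ is a non-trivial finite solvable group, its derived series terminates at $1$. Hence $R'\neq R$: otherwise $R=R'=R''=\cdots$ would never reach $1$. Therefore $Q=R/R'$ is a non-trivial finite abelian group, and the canonical projection $R\twoheadrightarrow Q$ is an epimorphism. This verifies the hypothesis of Theorem~\ref{thm:tower}.

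Then I fix any prime $p\in\pi(Q)$, which exists because $Q\neq1$. Corollary~\ref{cor:simple-tower} now gives a sequence $C=C_0\hookrightarrow C_1\hookrightarrow\cdots$ of finite simple skew braces, with diagonal embeddings from Proposition~\ref{prop:diagonal} and $(C_m,+)\cong A^{p^m}$. Because $R$ is solvable, the same corollary (via Proposition~\ref{prop:solvable}) gives that every $(C_m,\circ)$ is solvable.

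Finally, the tower is infinite in the sense that its terms are pairwise non-isomorphic: $|C_m|=|A|^{p^m}$ with $|A|>1$, so the orders strictly increase. There is no real obstacle here. The only point needing care is the non-triviality of $R/R'$, which is exactly where solvability is used.
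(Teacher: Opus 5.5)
Your proposal is correct and follows the paper's proof. The non-trivial solvable group $(C,\circ)$ has non-trivial abelianization $Q$, and one applies Corollary~\ref{cor:simple-tower} with a prime $p\in\pi(Q)$. Your extra remarks, explaining why $(C,\circ)\neq 1$ and that the orders $|A|^{p^m}$ strictly increase, are valid details the paper leaves implicit.
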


\begin{proof}
The finite solvable group $(C,\circ)$ is non-trivial and therefore is not
perfect. Hence $(C,\circ)/(C,\circ)'\neq1$. Choose a prime divisor $p$ of
$|(C,\circ)/(C,\circ)'|$ and apply Corollary~\ref{cor:simple-tower} with
$Q=(C,\circ)/(C,\circ)'.$
\end{proof}

\section{Applications}\label{sec:applications}

\subsection{Non-abelian simple additive groups}

Let $S$ be a finite non-abelian simple group and let
$R\leq\Hol(S)$ be a solvable regular subgroup. The corresponding skew
brace $C$ has additive group $(C,+)\cong S$ and multiplicative group
$(C,\circ)\cong R$. Since every ideal of $C$ is, in particular, a normal
subgroup of $(C,+)$, the simplicity of $S$ implies that $C$ is simple.
Moreover, $C$ is not a trivial skew brace, since $S$ is non-solvable
whereas $R$ is solvable. Since $R$ is a non-trivial finite solvable group, its abelianization
$R/R'$ is non-trivial. Thus the kernel--wreath construction produces an
infinite family for every prime divisor of $|R/R'|$.

\begin{corollary}\label{cor:simple-additive}
Let $S$ be a finite non-abelian simple group and let
$R\leq\Hol(S)$ be a solvable regular subgroup. Then, for every prime
$p\in\pi(R/R')$, there exists an infinite sequence of finite simple skew
braces $B_0,\ B_1,\ B_2,\ldots$
such that $(B_m,+)\cong S^{p^m}$
for every $m\geq0$, and every $(B_m,\circ)$ is solvable. Moreover, the
braces may be chosen so that $B_m$ embeds diagonally as a subbrace of
$B_{m+1}$ for every $m\geq0$.
\end{corollary}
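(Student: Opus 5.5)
The plan is to reduce the statement to Corollary~\ref{cor:simple-tower}, applied to the seed brace determined by $R$. By Theorem~\ref{Connection}, the regular subgroup $R\leq\Hol(S)$ yields a finite skew brace $C$ with $(C,+)\cong S$ and $(C,\circ)\cong R$. Once we check that $C$ is simple and not trivial, and that $R$ has a suitable non-trivial abelian quotient, the corollary gives the tower, and we set $B_m=C_m$.

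Simplicity and non-triviality are immediate, as in the paragraph preceding the statement. Every ideal of $C$ is a normal subgroup of $(C,+)\cong S$, and $S$ is simple, so $C$ is simple. If $C$ were trivial, then $a\circ b=a+b$ for all $a,b$, so $R\cong S$. This is impossible, because $R$ is solvable and $S$ is non-abelian simple, hence not solvable.

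For the abelian quotient, take $Q=R/R'$ with $\theta:R\twoheadrightarrow Q$ the canonical projection. Since $R$ is a non-trivial finite solvable group ($|R|=|S|>1$), it is not perfect, so $Q\neq1$. Then every prime $p\in\pi(R/R')$ lies in $\pi(Q)$, and Theorem~\ref{thm:tower} together with Corollary~\ref{cor:simple-tower} supplies simple skew braces $C=C_0,C_1,\dots$ with $(C_m,+)\cong S^{p^m}$. Each $C_m$ embeds diagonally in $C_{m+1}$ by Proposition~\ref{prop:diagonal}, and every $(C_m,\circ)$ is solvable by Proposition~\ref{prop:solvable}, since $R$ is solvable.

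There is no real obstacle here. The one point to get right is the non-triviality of $C$, which Theorem~\ref{thm:main} requires at the first step. The later steps do not need a separate check, because the induction inside Corollary~\ref{cor:simple-iteration} already shows that each $C_m$ with $m\geq1$ is non-trivial.
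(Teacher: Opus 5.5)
Your proposal is correct and follows essentially the same route as the paper's proof. You build the seed brace $C$ from $R$, check that it is simple (its ideals are normal in $S$) and not trivial ($R$ is solvable and $S$ is not), use that $R/R'\neq 1$ to obtain a quotient $C_p$, and invoke Corollary~\ref{cor:simple-tower}. The only cosmetic difference is that you take $Q=R/R'$ itself, whereas the paper passes explicitly to an epimorphism $R\twoheadrightarrow C_p$; this changes nothing.
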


\begin{proof}
Let $C$ be the skew brace associated with the regular subgroup $R$. As
observed above, $C$ is simple and is not a trivial skew brace. If
$p\in\pi(R/R')$, then the finite abelian group $R/R'$ admits an
epimorphism onto $C_p$. Hence $R\twoheadrightarrow C_p$, and the result follows from Corollary~\ref{cor:simple-tower}.
\end{proof}

Tsang proved that the finite non-abelian simple groups whose holomorph
contains a solvable regular subgroup are precisely
$\PSL_2(q)$, where $q\neq2,3$ is a prime power, together with
\[
\PSL_3(3),\quad
\PSL_3(4),\quad
\PSL_3(8),\quad
\PSU_3(8),\quad
\PSU_4(2),\quad
M_{11};
\]
see \cite[Theorem~1.3]{Tsang23}. Thus each solvable regular subgroup
arising from these groups yields, for every prime occurring in its
abelianization, an infinite family as in
Corollary~\ref{cor:simple-additive}. We now give two explicit examples.

\subsubsection{The case \texorpdfstring{$A_5$}{A5}}

Consider $S=A_5$. The group $A_5$ admits the exact factorization
$A_5=A_4C_5$, where $A_4\cap C_5=1$. By
\cite[Proposition~2.4]{Tsang23}, this gives a solvable regular subgroup $R\cong A_4\times C_5$
of $\Hol(A_5)$. Hence there exists a simple skew brace $C$ with
$(C,+)\cong A_5$ and $(C,\circ)\cong A_4\times C_5$. Since $A_4'=V_4$, we have $R/R'
\cong
(A_4/V_4)\times C_5
\cong
C_3\times C_5
\cong C_{15}.$
In particular, both $3$ and $5$ belong to $\pi(R/R')$.
Corollary~\ref{cor:simple-additive} therefore gives two infinite towers
of finite simple skew braces:
\[
(B_m^{(3)},+)\cong A_5^{\,3^m},
\qquad
(B_m^{(5)},+)\cong A_5^{\,5^m},
\]
for every $m\geq0$, with solvable multiplicative groups in both cases. More generally, Corollary~\ref{cor:simple-iteration} shows that for every
$n\geq2$ satisfying $\pi(n)\subseteq\{3,5\}$ there exists a finite
simple skew brace $B_n$ such that
$(B_n,+)\cong A_5^n$ and $(B_n,\circ)$ is solvable.

\subsubsection{The case \texorpdfstring{$\PSU_3(8)$}{PSU3(8)}}

We next consider $S=\PSU_3(8)$. In the proof of
\cite[Theorem~1.3, case~(3)]{Tsang23}, Tsang constructs a solvable
regular subgroup $R\leq\Hol(\PSU_3(8))$ which admits a quotient $H$ of
order $|H|=513=3^3\cdot19.$
The Sylow $19$-subgroup $P$ of $H$ is normal, since its number divides
$27$ and is congruent to $1$ modulo $19$. Thus $H/P$ is a group of
order $27$, and hence admits a quotient isomorphic to $C_3$. Consequently,
$R\twoheadrightarrow C_3$, so in particular $3\in\pi(R/R')$. Applying Corollary~\ref{cor:simple-additive}, for every $m\geq0$ there
exists a finite simple skew brace $B_m$ such that
\[
(B_m,+)\cong\PSU_3(8)^{\,3^m},
\]
and $(B_m,\circ)$ is solvable. 

\subsection{The simple skew braces of order 12}

Let $C$ be one of the two simple skew braces of order $12$ with
$(C,+)\cong A_4$ and
$(C,\circ)\cong C_3\rtimes C_4$, where the action of $C_4$ on $C_3$ is
non-trivial; see \cite{KSV21,Byott26}. The multiplicative group admits a
quotient isomorphic to $C_2$. Hence Corollary~\ref{cor:simple-tower} gives the following infinite
family.

\begin{corollary}\label{cor:A4}
For every $m\geq0$ there exists a finite simple skew brace $B_m$ such
that $(B_m,+)\cong A_4^{\,2^m}$
and $(B_m,\circ)$ is solvable.
\end{corollary}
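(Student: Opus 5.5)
The statement is a direct application of Corollary~\ref{cor:simple-tower} with $Q=C_2$ and $p=2$. There are three hypotheses to check: $C$ is a finite simple skew brace, $C$ is not a trivial skew brace, and $(C,\circ)$ is solvable and surjects onto $C_2$.

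Simplicity of $C$ is part of the cited classification \cite{KSV21}. Non-triviality follows because a trivial skew brace has $(C,\circ)=(C,+)$. Here $(C,+)\cong A_4$ while $(C,\circ)\cong C_3\rtimes C_4$, and these groups are not isomorphic: $A_4$ has no element of order $4$.

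Solvability is clear, since $C_3\rtimes C_4$ is an extension of $C_3$ by $C_4$. For the quotient, $C_3\trianglelefteq C_3\rtimes C_4$ gives a quotient isomorphic to $C_4$, and $C_4$ surjects onto $C_2$. So $Q=C_2$ is a non-trivial finite abelian quotient and $2\in\pi(Q)$.

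Corollary~\ref{cor:simple-tower} then gives a tower $C=C_0\hookrightarrow C_1\hookrightarrow\cdots$ of finite simple skew braces. It satisfies $(C_m,+)\cong A_4^{2^m}$, and every $(C_m,\circ)$ is solvable. Setting $B_m=C_m$ proves the statement.

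There is no real obstacle here. The only points needing care are confirming non-triviality, which is settled by the non-isomorphism above, and confirming the $C_2$ quotient, which comes from the $C_4$ quotient by the normal $C_3$.
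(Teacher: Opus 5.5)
Your proposal is correct and follows the paper's route: check that the order-$12$ seed is simple and not trivial, and that its multiplicative group $C_3\rtimes C_4$ is solvable and surjects onto $C_2$, then apply Corollary~\ref{cor:simple-tower} with $p=2$. Your explicit non-triviality check, that $A_4$ has no element of order $4$, fills in a detail the paper leaves implicit.
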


\subsection{Byott's family}

Let $p$ and $q$ be primes such that
$q\mid (p^p-1)/(p-1)$, and let $C$ be one of Byott's simple skew braces
of order $p^pq$ \cite{Byott26}. Write
$(C,+)\cong N=V\rtimes C_q$, where $V\cong C_p^p$, and
$(C,\circ)\cong R=C_q\rtimes P$, where $P$ is a $p$-group of order
$p^p$. In Byott's presentation there are elements $X,Z\in R$ satisfying
$X^q=1$ and $ZX=X^pZ$. Hence
$[Z,X]=X^{p-1}$. The multiplicative order of $p$ modulo $q$ is $p$, so
$p\not\equiv1\pmod q$. It follows that $X^{p-1}$ generates
$\langle X\rangle$, and therefore
$\langle X\rangle\leq R'$.

Now $R/\langle X\rangle$ is a non-trivial finite $p$-group, and hence
admits a quotient isomorphic to $C_p$. Consequently,
$R\twoheadrightarrow C_p$. Applying Corollary~\ref{cor:simple-tower}
gives the following.

\begin{corollary}\label{cor:byott}
For every $m\geq0$ there exists a finite simple skew brace $B_m$ such
that $(B_m,+)\cong N^{p^m}$
and $(B_m,\circ)$ is solvable. Moreover, the braces may be chosen as a
nested tower under diagonal subbrace embeddings.
\end{corollary}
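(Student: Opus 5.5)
The plan is to reduce Corollary~\ref{cor:byott} to Corollary~\ref{cor:simple-tower} applied to a single seed $C$ of Byott's family. That requires three inputs: that $C$ is simple and not trivial, that $R=(C,\circ)$ is solvable, and that $R$ has a non-trivial finite abelian quotient $Q$ with $p\in\pi(Q)$.

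Simplicity of $C$ is the content of Byott's theorem \cite{Byott26}, so I will just quote it. Non-triviality I will get from the group structures. If $C$ were a trivial skew brace, then $a\circ b=a+b$ for all $a,b$, so $(C,+)$ and $(C,\circ)$ would be the same group and $N\cong R$. To rule this out, compare normal Sylow subgroups:
\begin{itemize}
\item in $N=V\rtimes C_q$ the Sylow $p$-subgroup $V$ is normal;
\item in $R=C_q\rtimes P$ the Sylow $q$-subgroup $\langle X\rangle$ is normal.
\end{itemize}
If $N\cong R$, both Sylow subgroups would be normal, so the group would be nilpotent. But $R$ is not nilpotent, because $Z$ acts non-trivially on $\langle X\rangle$ via $ZX=X^pZ$ with $p\not\equiv1\pmod q$. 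Hence $N\not\cong R$ and $C$ is not trivial. (Alternatively, a trivial simple skew brace has prime order, while $|C|=p^pq$ is not prime.)

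Solvability of $R$ is immediate: $R$ is an extension of the cyclic group $\langle X\rangle$ by the $p$-group $P$.

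For the quotient, use the discussion preceding the statement. There, $\langle X\rangle\le R'$ is proved, and it is noted that $R/\langle X\rangle\cong P$ is a non-trivial $p$-group. A non-trivial finite $p$-group has a maximal subgroup, which is normal of index $p$. This gives an epimorphism $R\twoheadrightarrow C_p$, so we take $Q=C_p$, and then $p\in\pi(Q)$.

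Applying Corollary~\ref{cor:simple-tower} with this $Q$ and $p$ produces a tower $C=C_0\hookrightarrow C_1\hookrightarrow\cdots$ with the following properties:
\begin{itemize}
\item every $C_m$ is simple;
\item $(C_m,+)\cong N^{p^m}$;
\item every $(C_m,\circ)$ is solvable;
\item each $C_m$ embeds diagonally in $C_{m+1}$, by Theorem~\ref{thm:tower} and Proposition~\ref{prop:diagonal}.
\end{itemize}
Setting $B_m=C_m$ gives the corollary.

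The only step needing any care is the non-triviality of the seed, and the Sylow comparison above settles it. Everything else is a direct citation of earlier results.
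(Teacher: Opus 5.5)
Your proposal is correct and follows the paper's route: obtain $R\twoheadrightarrow C_p$ from $R/\langle X\rangle$ being a non-trivial $p$-group, then apply Corollary~\ref{cor:simple-tower}; your Sylow comparison also supplies the non-triviality of the seed, which the paper leaves implicit. One caveat is that your parenthetical alternative is false as stated. $\operatorname{Triv}(S)$ is simple for every non-abelian simple group $S$, since its ideals are exactly the normal subgroups of $S$. The remark is only salvageable here because groups of order $p^pq$ are solvable, or more directly because $V$ would be a proper non-zero ideal of a trivial brace on $N$.
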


\end{document}